\documentclass[onefignum,onetabnum]{siamart251216}

\usepackage{lipsum}
\usepackage{amsfonts}
\usepackage{graphicx}
\usepackage{epstopdf}
\usepackage{algorithm}
\usepackage{algpseudocode}

\ifpdf
  \DeclareGraphicsExtensions{.eps,.pdf,.png,.jpg}
\else
  \DeclareGraphicsExtensions{.eps}
\fi

\newsiamremark{remark}{Remark}
\newsiamremark{hypothesis}{Hypothesis}
\crefname{hypothesis}{Hypothesis}{Hypotheses}
\newsiamthm{claim}{Claim}
\newsiamthm{example}{Example}
\newsiamthm{property}{Property}
\newsiamremark{fact}{Fact}
\crefname{fact}{Fact}{Facts}

\headers{An Example Article}{J. Li, S. Liu, J. Yang, and T. Tang}

\author{
Jianan Li\thanks{Department of Mathematics, Southern University of Science and Technology, Shenzhen, China (\email{11930493@mail.sustech.edu.cn}).}
\and
Shuang Liu\thanks{Department of Mathematics, Southern University of Science and Technology, Shenzhen, China (\email{12031031@mail.sustech.edu.cn}).}
\and
Tao Tang\thanks{School of Mathematics and Statistics, Guangzhou Nanfang College, Guangzhou, China; Zhuhai SimArk Technology Co., LTD, Zhuhai, Guangdong, China. (\email{ttang@nfu.edu.cn}).}
\and
Jiang Yang\thanks{Corresponding author: Department of Mathematics, Guangdong Provincial Key Laboratory of Computational Science and Material Design, SUSTech International Center for Mathematics \& National Center for Applied Mathematics Shenzhen (NCAMS), Southern University of Science and Technology, Shenzhen, China (\email{yangj7@sustech.edu.cn}).}
}

\usepackage{amsopn}

\usepackage{tabularx}
\usepackage{booktabs}
\usepackage{amssymb}
\usepackage{amsmath}
\usepackage{color}
\usepackage{float}
\usepackage{subfig}
\usepackage{subfloat}
\usepackage{algpseudocode}

\newcommand{\bm}{\boldsymbol}

\newcommand{\TheTitle}{A second-order product-type implicit-explicit Runge-Kutta method preserving unit length and energy dissipation structures for gradient flows of vector fields}
\title{{\TheTitle}\thanks{This is the Author's Accepted Manuscript (AAM) of an article accepted for publication in \textit{SIAM Journal on Scientific Computing}.}}
\ifpdf
\hypersetup{
  pdftitle={\TheTitle},
  pdfauthor={}
}
\fi

\begin{document}

\maketitle

\begin{abstract}
Gradient flows of unit vector fields arise in a wide range of physical models such as harmonic map heat flows, nematic liquid crystals, and magnetization dynamics. Designing numerical schemes that simultaneously preserve the unit length constraint and dissipate energy is essential for reliable simulations of such systems. Although projection methods can effectively enforce the unit length constraint, ensuring energy dissipation under projection, especially in high-order schemes, remains challenging. Unlike traditional implicit-explicit Runge-Kutta (IMEX-RK) methods, in this work we propose a general methodology for constructing product-type IMEX-RK schemes that offers greater adaptability to various models with the goal of designing structure-preserving numerical schemes. For gradient flows of unit vector fields with Dirichlet energy, we design a linear and second-order numerical scheme that simultaneously preserves energy dissipation and the unit length constraint by using product-type IMEX-RK methods and projection techniques. Numerical experiments verify the accuracy, stability, and structure-preserving properties of the scheme. According to our best knowledge, this is the first second-order linear scheme that can preserve both the unit length and the original Dirichlet energy for harmonic map heat flows.
\end{abstract}

\begin{keywords}
Product-type implicit-explicit Runge-Kutta schemes, Energy stability, Projection method, unit length constraint
\end{keywords}

\begin{MSCcodes}
65M06 65M12 65N12
\end{MSCcodes}

\section{Introduction}
	Gradient flows of vector fields with unit length constraint are fundamental models in many areas of mathematical physics. Such flows arise as gradient flows constrained on nonlinear manifolds, most commonly the unit sphere, and describe the relaxation dynamics toward energy minimizers under physical constraints. Notable examples include the harmonic map heat flow \cite{brezis1986harmonic, hardt1986existence, riviere1995everywhere, bartels2005stability, gui2022convergence}, the Oseen-Frank model for liquid crystals \cite{oseen1933theory, frank1958liquid, cohen1987minimum, Alouges1997, ramage2013preconditioned, virga2018variational}, and the Landau–Lifshitz–Gilbert (LLG) equation modeling magnetization dynamics \cite{landau1935theory, landau1992theory, weinan2001numerical, bartels2008numerical, alouges2014convergent}. 
    These systems are governed by energy functionals and possess rich geometric structures, such as energy dissipation and constraint preservation, which must be carefully respected in numerical simulations.

    A representative example is the gradient flow of the Dirichlet energy
    \[
	E[\bm{m}(\bm{x})] = \int_{\Omega} \frac{1}{2}|\nabla \bm{m}|^2 \, \mathrm{d}\bm{x} 
    \]
    under unit length constraint, where $\bm{m}:\Omega\to\mathbb{R}^{3}$ is a vector field satisfying $|\bm{m}(\bm{x})|=\sqrt{\bm{m}\cdot\bm{m}}=1$. 
    The evolution equation is given by
    \begin{equation}\label{Gradient Flow}
        \begin{aligned}
            &\bm{m}_t = - P(\bm{m})\bm{\mu}, \\
		&\bm{\mu} = \frac{\delta E}{\delta \bm{m}} = -\Delta \bm{m},  \\ 
		&P(\bm{m}) =  \alpha\left(I - \frac{\bm{m}\bm{m}^\top}{|\bm{m}|^2}\right) + \beta \bm{m}\times \cdot, \\
        &|\bm{m}(\bm{x},0)| = 1,
        \end{aligned}
    \end{equation}
	where $\alpha>0,\,\beta\in\mathbb{R}$ are constants and the system is subject to time-independent Dirichlet or homogeneous Neumann boundary conditions. The operator $P(\bm{m})$ ensures $\bm{m}\cdot\bm{m}_t=0$, and thus $|\bm{m}(\bm{x},t)|=1$ is preserved for all time. The term $\beta$ involved can be interpreted as a Hamiltonian (or symplectic) flow part without energy dissipation.

    By analyzing equation \eqref{Gradient Flow}, two fundamental properties of the flow can be derived:
\begin{itemize}
    \item Energy dissipation: the total energy decreases over time:
\[
\frac{d}{dt} E[\bm{m}(\bm{x})] = -\alpha \int_{\Omega} \bm{\mu}^\top \mathbf{P}(\bm{m}) \bm{\mu} \, \mathrm{d}\bm{x} \leqslant 0.  
\]
    \item The unit length constraint is maintained:
\[
\frac{d}{dt} |\bm{m}|^2 / 2 = -\bm{m}^\top  \mathbf{P}(\bm{m}) \bm{\mu} = 0.
\]
\end{itemize}
For numerical methods, preserving these two properties at the discrete level is crucial for long-time stability and physical fidelity. However, achieving both in a structure-preserving and high-order scheme is nontrivial.

Existing numerical methods to enforce the unit length constraint \( |\bm{m}| = 1 \) can be broadly categorized into two main approaches. The first involves relaxing the constraint by incorporating penalty terms or introducing Lagrange multipliers, as demonstrated in works such as \cite{pistella1999numerical, prohl2001computational, adler2015energy, cheng2022new2}. The second and most widely used method is the projection method \cite{Alouges1997, wang2001gauss, Alouges2008, chen2021convergence, an2021optimal, gui2022convergence, li2020arbitrarily}, which combines flexibility with ease of implementation. Projection methods are compatible with standard spatial discretization techniques, such as finite difference methods (FDMs) or finite element methods (FEMs), and linearly implicit time-stepping schemes. Although projection methods easily handle the unit length constraint, developing high-order schemes that simultaneously preserve energy dissipation remains a significant challenge.

In the literature, only a few methods successfully achieve this dual preservation. For instance, a second-order finite difference scheme was proposed in \cite{Fuwa2012} that automatically preserves both the unit length and energy dissipation without utilizing projection. However, this approach requires solving a computationally expensive, fully coupled nonlinear system at each time step and suffers from a CFL condition $\tau\leqslant Ch^2$. To alleviate this computational burden, recent efforts have focused on decoupling strategies. For example, within the projection framework, a first-order linear semi-implicit projection method (SIP1) was designed in \cite{du2024} to preserve both structures. To achieve higher-order accuracy without fully coupled nonlinear systems, the second-order Lagrange multiplier (LM2) method \cite{cheng2023length} was introduced. Notably, while LM2 significantly reduces complexity by solving only a scalar nonlinear equation for the multiplier, this residual nonlinear step still poses potential solvability issues. This fundamental limitation motivates the development of our scheme. By completely avoiding nonlinear solvers, our proposed method achieves the same dual structure preservation through purely linear updates, thereby unconditionally guaranteeing unique solvability at each time step.
	
To systematically achieve this high-order accuracy within a linear framework, our scheme draws fundamental inspiration from Runge-Kutta (RK) methods.
RK methods are widely used approach to design high-order numerical schemes, such as implicit-explicit Runge-Kutta (IMEX-RK) methods \cite{Ascher1997,Bos2007,Jang2015,Gui2024,Fu2024ED} and exponential time differencing Runge-Kutta (ETDRK) methods \cite{Beylkin1998,cox2002,du2019,Cao2024}. In recent years, several energy-dissipating Runge-Kutta methods have been developed, including ETDRK methods \cite{Fu2022, Fu2024},
IMEX-RK methods \cite{Fu2024ED, Li2024}, and extrapolated RK-SAV methods \cite{Akrivis2019, Tang2022}. Unlike traditional additive IMEX-RK schemes \cite{Ascher1997}, \cite{Li2024} proposed a product form for IMEX-RK schemes.

In this work, we propose a general methodology for constructing product-type IMEX-RK (PRK) schemes.
Compared to traditional IMEX-RK methods \cite{Ascher1997}, the PRK methods exhibit significantly enhanced flexibility. This enables the tailoring of distinct numerical schemes to specific models, thereby presenting expanded opportunities for designing structure-preserving numerical schemes.
For gradient flows of unit vector fields with Dirichlet energy, we design a linear and second-order time-discrete numerical scheme that simultaneously preserves energy dissipation and the unit length constraint by using PRK methods and projection techniques. A sufficient condition to ensure energy dissipation after projection onto the unit sphere is that the length is not less than $1$ before projection \cite{du2024}. Therefore, we propose certain requirements for the IMEX-RK coefficients to ensure energy dissipation and length increase before projection. However, the existence of a third-order IMEX-RK scheme that meets such requirements is open.
To the best of our knowledge, this is the first linear second-order scheme that unconditionally preserves both energy dissipation and the unit length constraint for gradient flow \eqref{Gradient Flow}.  

The rest of this paper is organized as follows. Section \ref{sec2} introduces the proposed product-type IMEX-RK (PRK) framework and derives the order conditions. Section \ref{sec3} presents the PRK scheme for gradient flows of unit vector fields and proves the energy dissipation and preservation of the unit length constraint. In Section \ref{sec4}, we present numerical experiments to validate the accuracy, efficiency, and robustness of the method.

\section{Product-type IMEX-RK methods}\label{sec2}
IMEX-RK methods are designed to handle problems where both stiff and non-stiff components coexist. 
In this formulation, the stiff components are treated implicitly to ensure stability without restrictive time step constraints, while the non-stiff components are handled explicitly to preserve accuracy and efficiency. This partitioning allows an optimal balance between computational cost and numerical performance. Their flexibility and efficiency make them an attractive choice for a wide range of applications in computational science and engineering.

In this section, we introduce the traditional IMEX-RK methods. Next, we propose a general methodology for constructing product-type IMEX-RK schemes and derive the order conditions.


\subsection{Implicit-explicit Runge-Kutta (IMEX-RK) methods}
When dealing with some practical problems, the differential equations can be
written as the sum of two terms, a stiff one and a non-stiff one:
\begin{equation}
	\label{addform}
	u_t = g(u)+f(u),
\end{equation}
where $f$ corresponds to the non-stiff term and $g$ corresponds to the stiff term, such as reaction-diffusion equations, convection-diffusion equations and hyperbolic systems with relaxation \cite{Caflisch97, Kennedy2003}.

We now introduce IMEX-RK schemes \cite{Ascher1997}. 
An $s$-stage diagonally implicit Runge-Kutta (DIRK) scheme for the stiff term $g(u)$ is determined by a coefficient matrix 
$A=\left(a_{i j}\right) \in \mathbb{R}^{s \times s}$, and column vectors $\boldsymbol{c} = (c_i)$, $\boldsymbol{b}=(b_i) \in \mathbb{R}^{s}$ in the usual Butcher notation, where $\sum_{j=1}^i a_{i j}=c_i$. For the non-stiff term $f(u)$, an $s$-stage explicit scheme is determined by a different coefficient matrix $\widehat{A} =\left(\hat{a}_{i j}\right) \in \mathbb{R}^{s \times s}$, and column vectors $\boldsymbol{\widehat{c}}= (\hat{c}_i)$, $\boldsymbol{\widehat{b}}= (\hat{b}_i) \in \mathbb{R}^{s}$, where $\sum_{j=1}^i \hat{a}_{i j}=\hat{c}_i$ . 
One step from $u^{n}$ to $u^{n+1}$ of the IMEX-RK scheme is given as follows
\begin{equation}
	\label{addformIMEX}
	\begin{aligned}
		&u_0  =u^n, \\
		&u_i  =u_0+\tau\Big(\sum_{j=1}^i a_{i j} g(u_j)+\sum_{j=1}^i \hat{a}_{i j} f\left(u_{j-1}\right)\Big), \quad \text{for }i=1,\,\dots,\, s, \\
		&u^{n+1} =u_0+\tau\Big(\sum_{i=1}^s b_{i} g(u_i)+\sum_{i=1}^s \hat{b}_{i} f\left(u_{i-1}\right)\Big).
	\end{aligned}
\end{equation}
The Butcher tableau \cite{butcher2016numerical} of IMEX-RK schemes is given as follows,
\begin{equation}
	A:\,\begin{tabular}{c|ccccc} $c_1$ &  $a_{11}$ & 0 & $\ldots$ & 0 \\ $c_2$ & $a_{21}$ & $a_{22}$ & $\ddots$ & $\vdots$ \\ $\vdots$ &  $\vdots$ &  & $\ddots$ & $0$ \\ $c_s$ &  $a_{s 1}$ & $a_{s 2}$ & $\ldots$ & $a_{s s}$ \\ \hline &  $b_1$ & $b_2$ & $\ldots$ & $b_s$ \end{tabular}
	\qquad 
	\widehat{A}:\,\begin{tabular}{c|ccccc}  $\hat{c}_1$ & $\hat{a}_{11}$ & 0 & $\ldots$ & 0  \\ $\hat{c}_2$ & $\hat{a}_{21}$ & $\hat{a}_{22}$ & $\ddots$ & $\vdots$  \\ $\vdots$ & $\vdots$ &  & $\ddots$ & $0$\\ $\hat{c}_s$ & $\hat{a}_{s 1}$ & $\hat{a}_{s 2}$ & $\ldots$ & $\hat{a}_{s s}$  \\ \hline & $\hat{b}_1$ & $\hat{b}_2$ & $\ldots$ & $\hat{b}_s$  \end{tabular}.
\end{equation}
Note that $a_{ij},\hat{a}_{ij}=0$ for $j>i$.

However, the stiff terms may not appear in the equation as an additive form like \eqref{addform}. Therefore, it is not easy to apply the standard additive IMEX-RK schemes directly. A more general class of such problems can be written as 
\begin{equation*}
	\begin{aligned}
		u_t = \mathcal{H}\left(u/\varepsilon,u\right),
	\end{aligned}
\end{equation*}
where the small parameter $\varepsilon$ represent the stiff term. In \cite{Bos2016}, the authors propose a more general IMEX-RK scheme for such problems.

\subsection{Product-type IMEX-RK (PRK) methods}
Compared to the additive IMEX-RK schemes, \cite{Bos2016} offers a broader framework. However, its generality can also be a limitation as it may lack the necessary flexibility for analysis and construction under specific constraints. Therefore, we propose a methodology for constructing product-type IMEX-RK (PRK) schemes, which retains a generalized structure while ensuring sufficient flexibility. More importantly, this approach provides a framework for designing coefficient matrices across different problems, facilitating the development of new schemes that adhere to specific physical constraints. For example, for the anisotropic phase-field dendritic growth model, PRK methods break the barrier of the lack of high-order schemes preserving the original energy dissipation \cite{Li2024}.

Firstly, we start from a simple product form equation
\begin{equation*}
	u_t = f_1(u)f_2(u),
\end{equation*}
where $f_1$ is the non-stiff term and $f_2$ is the stiff term. Therefore, we discretize $f_1$ explicitly and $f_2$ implicitly. 
We propose a method for designing IMEX-RK schemes for such equations.
Consider the following Butcher tableau
\begin{equation*}
	\begin{aligned}
		A:\,\begin{array}{c|cccc}
			c_1 &a_{11}&0&\cdots &0\\
			c_2 &a_{21}&a_{22}&\ddots &\vdots\\
			\vdots&\vdots&&\ddots&0\\
			c_s &a_{s1}&a_{s2}&\cdots &a_{ss}\\ \hline
			&b_1&b_2&\cdots &b_s
		\end{array},\,
		D^{(r)}:\,\begin{array}{c|cccc}
			1 &d_{11}^{(r)}&0&\cdots &0\\
			1 &d_{21}^{(r)}&d_{22}^{(r)}&\ddots &\vdots\\
			\vdots&\vdots&&\ddots&0\\
			1 &d_{s1}^{(r)}&d_{s2}^{(r)}&\cdots &d_{ss}^{(r)}\\ \hline
			&&& &
		\end{array},\;r=1,\,2,\,\dots,
	\end{aligned}
\end{equation*}
where $c_i=\sum_{j=1}^{i} a_{ij}$
and $1=\sum_{j=1}^{i} d_{ij}^{(r)}$. Let $A=(a_{ij})_{s\times s}$, $D^{(r)}=\left(d_{ij}^{(r)}\right)_{s\times s}$, $\bm{b}=(b_1,\,\dots,\,b_s)^\top$ and $\bm{c}=(c_1,\,\dots,\,c_s)^\top$.

Given the numerical solution $u^n$ at time $t^n$. The PRK scheme can be given as follows:
\begin{equation}
	\label{intrPRK1}
	\begin{aligned}
		&u_0  =u^n, \\
		&u_i  =u_0+\tau\sum_{j=1}^i a_{i j} f_1\Big(\sum_{k=1}^j d_{jk}^{(1)}u_{k-1}\Big) f_2\Big(\sum_{k=1}^j d_{jk}^{(2)}u_{k}\Big), \quad \text{for }i=1,\,\dots,\, s, \\
		&u^{n+1} =u_0+\tau\sum_{j=1}^s b_{ j} f_1\Big(\sum_{k=1}^j d_{jk}^{(1)}u_{k-1}\Big)f_2\Big(\sum_{k=1}^j d_{jk}^{(2)}u_{k}\Big).
	\end{aligned}
\end{equation}
We can also choose whether the coefficient matrices $D^{(r)}$ are `inside' or `outside' of $f_r(u)$ in the IMEX-RK scheme. For example, the PRK scheme can also be given as follows:
\begin{equation}\label{intrPRK2}
	\begin{aligned}
		&u_0  =u^n, \\
		&u_i  =u_0+\tau\sum_{j=1}^i a_{i j}\Big( \sum_{k=1}^j d_{jk}^{(1)}f_1(u_{k-1}) \Big)\Big(\sum_{k=1}^j d_{jk}^{(2)}f_2(u_{k})\Big), \quad \text{for }i=1,\,\dots,\, s, \\
		&u^{n+1} =u_0+\tau\sum_{j=1}^s b_{ j}\Big( \sum_{k=1}^j d_{jk}^{(1)}f_1(u_{k-1})\Big)\Big( \sum_{k=1}^j d_{jk}^{(2)}f_2(u_{k})\Big).
	\end{aligned}
\end{equation}

We use a more compact form introduced in \cite{Li2024} to represent the PRK scheme.
Let 
\begin{gather*}
	\bm{u} := \begin{pmatrix}
		u_1 \\ u_2\\ \vdots \\ u_s
	\end{pmatrix},   \;
	\widehat{\bm{u}} := \begin{pmatrix}
		u_0 \\ u_1\\ \vdots \\ u_{s-1}
	\end{pmatrix},  \;
	f_r(\bm{w}) = \begin{pmatrix}
		f_r(w_1) \\ f_r(w_2)\\ \vdots \\ f_r(w_s)
	\end{pmatrix},\;
	\bm{1} := \begin{pmatrix}
		1 \\ 1\\ \vdots \\ 1
	\end{pmatrix},\;
	 \bm{0} := \begin{pmatrix}
		0 \\ 0\\ \vdots \\ 0
	\end{pmatrix},
\end{gather*}
where $\bm{w}=(w_1,\,\dots,\,w_s)^\top$. Then the PRK scheme \eqref{intrPRK1} can be written as follows:
\begin{equation}
	\begin{aligned}
		&\bm{u} = u_0\bm{1} + \tau A (f_1(D^{(1)}\widehat{\bm{u}})\circ f_2(D^{(2)}\bm{u})),\\
		&u^{n+1} = u_0 + \tau \bm{b}^\top(f_1(D^{(1)}\widehat{\bm{u}})\circ f_2(D^{(2)}\bm{u})),
	\end{aligned}
\end{equation}
where the symbol `$\circ$' represents the Hadamard product, i.e., the element-wise product of vectors.

Now, we propose a methodology for constructing PRK schemes. First, we split the equation into several parts. 
For each part, design an `outside' RK coefficient matrix $A^{(i)}$ with $A^{(i)}\bm{1}=\bm{c}^{(i)}$, $\bm{b}^{(i)}=\left(b_j^{(i)}\right)_{s\times 1}$,  and several `inside' RK coefficient matrices $D^{(i,r)}$ with $D^{(i,r)}\bm{1}=\bm{1}$: 
\begin{equation*}\renewcommand\arraystretch{1.5}
	\begin{aligned}
		A^{(i)}:\,\begin{array}{c|cccc}
			c_1^{(i)} &a_{11}^{(i)}&0&\cdots &0\\
			c_2^{(i)} &a_{21}^{(i)}&a_{22}^{(i)}&\ddots &\vdots\\
			\vdots&\vdots&&\ddots&0\\
			c_s^{(i)} &a_{s1}^{(i)}&a_{s2}^{(i)}&\cdots &a_{ss}^{(i)}\\ \hline
			&b_1^{(i)}&b_2^{(i)}&\cdots &b_s^{(i)}
		\end{array},\,
		D^{(i,r)}:\,\begin{array}{c|cccc}
			1 &d_{11}^{(i,r)}&0&\cdots &0\\
			1 &d_{21}^{(i,r)}&d_{22}^{(i,r)}&\ddots &\vdots\\
			\vdots&\vdots&&\ddots&0\\
			1 &d_{s1}^{(i,r)}&d_{s2}^{(i,r)}&\cdots &d_{ss}^{(i,r)}\\ \hline
			&&& &
		\end{array},
	\end{aligned}
\end{equation*}
for $i=1,\,2,\,\dots$.
We call IMEX-RK methods of this type \textbf{product-type IMEX-RK (PRK)} methods. 
For example, if an equation can be split into
\begin{equation*}
	u_t = M(u)(\mathcal{L}u+\mathcal{N}(u)),
\end{equation*}
where $M(u)$ is a nonnegative mobility, $\mathcal{L}u$ is a linear stiff term, and $\mathcal{N}(u)$ is a nonlinear non-stiff term, we can design a PRK scheme as follows:
\begin{equation*}
    \begin{aligned}
        &u_0=u^n,\\
        &\bm{u} = u_0\bm{1} + \tau \left[A^{(1)} (M(D^{(1,1)}\widehat{\bm{u}})\circ D^{(1,2)}\mathcal{L}\bm{u})+A^{(2)} (M(D^{(2,1)}\widehat{\bm{u}})\circ D^{(2,2)}\mathcal{N}(\widehat{\bm{u}}))\right],\\
        &u^{n+1} = u_0 + \tau \left[\bm{b}^{(1)T} (M(D^{(1,1)}\widehat{\bm{u}})\circ D^{(1,2)}\mathcal{L}\bm{u})+\bm{b}^{(2)T} (M(D^{(2,1)}\widehat{\bm{u}})\circ D^{(2,2)}\mathcal{N}(\widehat{\bm{u}}))\right].
    \end{aligned}
\end{equation*}
In addition, PRK schemes can be applied in coupled multiphysics systems to design structure-preserving schemes, such as the anisotropic phase-field dendritic growth model \cite{Li2024}.

\subsection{Order conditions and local truncation error}
In this subsection, we calculate the order conditions and local truncation error of PRK schemes. For convenience, we only consider PRK schemes of the form \eqref{intrPRK1}.
Define
\begin{equation*}
	J = \begin{pmatrix}
		0&&&\\
		1&0&&\\
		&\ddots&\ddots&\\
		&&1&0
	\end{pmatrix}
\end{equation*}

First, we give an \textbf{a priori estimate} of the PRK scheme \eqref{intrPRK1}.
Assume that the asymptotic expansion of $\bm{u}$ is given as follows
\begin{equation}
	\label{ocPriorasy}
	\bm{u} = \bm{u}^{(0)}+\tau \bm{u}^{(1)}+ \tau^2 \bm{u}^{(2)} +O(\tau^3),
\end{equation}
where $\bm{u}^{(i)} $ are functions of $u_0$ for $i=0,1,2$.
Applying Taylor expansion to $f_1(D^{(1)}\hat{\bm{u}})$ and $f_2(D^{(2)}\bm{u})$, we have
\begin{equation}
	\label{TaylorEMN}
	\begin{aligned}
		f_1(D^{(1)}\hat{\bm{u}}) =& f_1(u_0)\bm{1} +  f_1'(u_0)D^{(1)}J(\bm{u}-u_0\bm{1}) + \frac{1}{2}  f_1''(u_0)[D^{(1)}J(\bm{u}-u_0\bm{1})]^2+ O(\tau^3),\\
		f_2(D^{(2)}\bm{u}) =& f_2(u_0)\bm{1} +  f_2'(u_0)D^{(2)}(\bm{u}-u_0\bm{1}) + \frac{1}{2}  f_2''(u_0)[D^{(2)}  (\bm{u}-u_0\bm{1})]^2+ O(\tau^3),
	\end{aligned}
\end{equation}
where we use $\bm{w}^2$ to represent $\bm{w}\circ \bm{w}$. Therefore,
\begin{equation*}
	\begin{aligned}
		\bm{u} =& u_0\bm{1} +\tau f_1(u_0)f_2(u_0)A\bm{1}+\tau Af_1'(u_0)D^{(1)}J(\bm{u}-u_0\bm{1})f_2(u_0)\\
		&+\tau A f_1(u_0)f_2'(u_0)D^{(2)}(\bm{u}-u_0\bm{1}) + O(\tau^3).
	\end{aligned}
\end{equation*}
Then inserting \eqref{ocPriorasy} into the above equation, we have
\begin{equation*}
	\begin{aligned}
		\bm{u}=& u_0\bm{1}+ \tau f_1(u_0)f_2(u_0)A\bm{1}\\
		& +\tau^2 \left[Af_1'(u_0)D^{(1)}J\bm{u}^{(1)}f_2(u_0)+ A f_1(u_0)f_2'(u_0)D^{(2)}\bm{u}^{(1)}\right]+O(\tau^3).
	\end{aligned}
\end{equation*}
Note that 
\begin{equation*}
	\begin{aligned}
		\bm{u}^{(0)} =& u_0\bm{1},\\
		\bm{u}^{(1)} =& f_1(u_0)f_2(u_0)A\bm{1},\\
		\bm{u}^{(2)} = & Af_1'(u_0)D^{(1)}J\bm{u}^{(1)}f_2(u_0)+ A f_1(u_0)f_2'(u_0)D^{(2)}\bm{u}^{(1)}.
	\end{aligned}
\end{equation*}

Next, we calculate the local truncation error of the PRK scheme \eqref{intrPRK1}. Define $v_0 = u(t_n)$ and 
\begin{equation}
	\label{ocasyv}
	\begin{aligned}
		&\bm{v} = \bm{v}^{(0)}+\tau \bm{v}^{(1)}+ \tau^2 \bm{v}^{(2)},\\
		&\hat{\bm{v}} = \bm{v}^{(0)}+\tau J\bm{v}^{(1)}+ \tau^2 J\bm{v}^{(2)} ,
	\end{aligned}
\end{equation}
where 
\begin{equation}
	\begin{aligned}
		\bm{v}^{(0)} =& v_0\bm{1},\\
		\bm{v}^{(1)} =& f_1(v_0)f_2(v_0)A\bm{1},\\
		\bm{v}^{(2)} = & Af_1'(v_0)D^{(1)}J\bm{v}^{(1)}f_2(v_0)+ A f_1(v_0)f_2'(v_0)D^{(2)}\bm{v}^{(1)}.
	\end{aligned}
\end{equation}
Then, the local truncation error of the PRK scheme \eqref{intrPRK1} $\bm{\eta}$ and $\eta_n$ can be define as follows
\begin{equation}
	\label{localTE}
	\begin{aligned}
		&v_0=u(t_n)\\
		&\bm{v} = v_0\bm{1}+\tau A[f_1(D^{(1)}\widehat{\bm{v}})\circ f_2(D^{(2)}\bm{v})]+\tau \bm{\eta},\\
		&u(t_{n+1}) = v_0+\tau \bm{b}^\top[f_1(D^{(1)}\widehat{\bm{v}})\circ f_2(D^{(2)}\bm{v})]+ \tau \eta_n.
	\end{aligned}
\end{equation}
Based on the above a priori estimate, we can obtain $\bm{\eta}=O(\tau^2)$. Here, $\eta_n$ is defined as the rate form of the local truncation error, so that the incremental error is $\tau \eta_n = O(\tau^4)$ for a third-order method.

Now, we calculate the third-order conditions of the PRK scheme \eqref{intrPRK1}, i.e., $\eta_n=O(\tau^3)$. Using the Taylor expansion to $f_1(D^{(1)}\hat{\bm{u}})$ and $f_2(D^{(2)}\bm{u})$ as shown in \eqref{TaylorEMN}, we have
\begin{equation*}
	\begin{aligned}
		u(t_{n+1})=& v_0 +\tau f_1(v_0)f_2(v_0)\bm{b}^\top\bm{1}+\tau \bm{b}^\top f_1'(v_0)D^{(1)}J(\bm{v}-v_0\bm{1})f_2(v_0)\\
		&+\tau \bm{b}^\top f_1(v_0)f_2'(v_0)D^{(2)}(\bm{v}-v_0\bm{1}) \\
		&+\tau \bm{b}^\top [f_1'(v_0)D^{(1)}J(\bm{v}-v_0\bm{1})\circ f_2'(v_0)D^{(2)}(\bm{v}-v_0\bm{1})]\\
		&+\tau \frac{1}{2} \bm{b}^\top  f_1''(v_0)[D^{(1)}J(\bm{v}-v_0\bm{1})]^2f_2(v_0)\\
		&+\tau \frac{1}{2} \bm{b}^\top  f_1(v_0) f_2''(v_0)[D^{(2)}(\bm{v}-v_0\bm{1})]^2 + O(\tau^4)+\tau \eta_n.
	\end{aligned}
\end{equation*}
Therefore, 
\begin{equation*}
	\begin{aligned}
		u(t_{n+1})=&v_0+ \tau f_1(v_0)f_2(v_0)\bm{b}^\top \bm{1}\\
		& +\tau^2 \bm{b}^\top \left[f_1'(v_0)f_2(v_0)D^{(1)}J\bm{v}^{(1)}+ f_1(v_0)f_2'(v_0)D^{(2)}\bm{v}^{(1)}\right]\\
        & +\tau^3 \bm{b}^\top \left[f_1'(v_0)f_2(v_0)D^{(1)}J\bm{v}^{(2)}+ f_1(v_0)f_2'(v_0)D^{(2)}\bm{v}^{(2)}\right.\\
        &+ \frac{1}{2} f_1''(v_0)[D^{(1)}J\bm{v}^{(1)}]^2f_2(v_0)+f_1'(v_0)f_2'(v_0)D^{(1)}J\bm{v}^{(1)}\circ D^{(2)}\bm{v}^{(1)}\\
		&+ \left.\frac{1}{2} f_1(v_0) f_2''(v_0)[D^{(2)}\bm{v}^{(1)}]^2 \right]+ O(\tau^4)+\tau \eta_n.
	\end{aligned}
\end{equation*}
Then, comparing with  
\begin{equation*}
	u(t_{n+1}) = u(t_n)+\tau u_t(t_n)+\frac{1}{2}\tau^2 u_{tt}(t_n)+\frac{1}{6}\tau^3 u_{ttt}(t_n)+O(\tau^4),
\end{equation*}
where 
\begin{equation*}
	\begin{aligned}
		u_t =& f_1(u)f_2(u),\\
		u_{tt}=& f_1'(u)f_2(u)u_t+f_1(u)f_2'(u)u_t,\\
		u_{ttt}=&\left [f_1'(u)f_2(u)+f_1(u)f_2'(u) \right]u_{tt} \\
		& +\left[f_1''(u)f_2(u)+2f_1'(u)f_2'(u)+f_1(u)f_2''(u) \right]u_t^2,
	\end{aligned}
\end{equation*}
we obtain the order conditions as follows:
\begin{itemize}
\item \textbf{First-order condition:}
\begin{equation}\label{prk_1stordcon}
	\bm{b}^\top \bm{1}=1.
\end{equation}
\item \textbf{Second-order condition:}
\begin{equation}\label{prk_2ndordcon}
	\begin{aligned}
		\bm{b}^\top D^{(1)}J\bm{c}=\bm{b}^\top D^{(2)}\bm{c}=\frac{1}{2}.
	\end{aligned}
\end{equation}
\item  \textbf{Third-order condition:}
\begin{equation}\label{prk_3rdordcon}
	\begin{gathered}
		\bm{b}^\top D^{(1)}JAD^{(1)}J\bm{c}=\bm{b}^\top D^{(1)}JAD^{(2)}\bm{c}=\frac{1}{6},\\
        \bm{b}^\top D^{(2)}AD^{(1)}J\bm{c}=\bm{b}^\top D^{(2)}AD^{(2)}\bm{c}=\frac{1}{6},\\
		\bm{b}^\top (D^{(1)}J\bm{c})^2=\bm{b}^\top (D^{(1)}J\bm{c}\circ D^{(2)}\bm{c})=\bm{b}^\top (D^{(2)}\bm{c})^2=\frac{1}{3}.
	\end{gathered}
\end{equation}
\end{itemize}
Note that if $f_2''=0$, we don't need the order condition $\bm{b}^\top (D^{(2)}\bm{c})^2=\frac{1}{3}$. Therefore, we have the following theorem which is useful for error analysis.
\begin{theorem}
	If the order conditions \eqref{prk_1stordcon}-\eqref{prk_3rdordcon} hold of a product-type IMEX-RK scheme \eqref{intrPRK1}, then this product-type IMEX-RK scheme achieves third-order accuracy and the local truncation error satisfies
	\begin{equation}
		\|\eta_n\|+\tau\|\bm{\eta}\|\leqslant C\tau^3,
	\end{equation}
	where $\eta_n$ and $\bm{\eta}$ are given by \eqref{localTE}, for sufficiently smooth exact solutions.
\end{theorem}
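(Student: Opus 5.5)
The argument is a Taylor comparison between the scheme applied to the constructed stage values \eqref{ocasyv} and the exact solution. The definitions \eqref{localTE} fix $\bm{\eta}$ and $\eta_n$ once $\bm{v}$, $\widehat{\bm{v}}$ and $v_0=u(t_n)$ are chosen, so the task is to bound the two residuals separately. Throughout, $u$ is smooth and $f_1,f_2$ are at least $C^3$ near the trajectory. Hence every $O(\tau^k)$ remainder has a constant depending only on derivative bounds of $f_1,f_2,u$ and on the coefficient matrices.

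\textbf{Stage residual.} Start from \[\tau\bm{\eta}=\bm{v}-v_0\bm{1}-\tau A[f_1(D^{(1)}\widehat{\bm{v}})\circ f_2(D^{(2)}\bm{v})].\]
\begin{enumerate}
\item Since $\bm{v}-v_0\bm{1}=O(\tau)$ and $D^{(r)}\bm{1}=\bm{1}$, Taylor's theorem as in \eqref{TaylorEMN} applies about $v_0$, with first-order remainder $O(\tau^2)$ and second-order remainder $O(\tau^3)$.
\item Note that $J\bm{v}^{(0)}$ is not $v_0\bm{1}$, since its first entry is zero. What holds is $D^{(1)}\widehat{\bm{v}}-v_0\bm{1}=D^{(1)}J(\bm{v}-v_0\bm{1})$, which is exactly the form used in \eqref{TaylorEMN}.
\item Expanding to first order gives \[\tau A[\cdots]=\tau f_1f_2\bm{c}+\tau^2A\big[f_1'f_2D^{(1)}J\bm{v}^{(1)}+f_1f_2'D^{(2)}\bm{v}^{(1)}\big]+O(\tau^3).\]
\item The right-hand side equals $\tau\bm{v}^{(1)}+\tau^2\bm{v}^{(2)}+O(\tau^3)$ by the definitions of $\bm{v}^{(1)},\bm{v}^{(2)}$.
\item Hence $\tau\bm{\eta}=O(\tau^3)$, i.e. $\tau\|\bm{\eta}\|\le C\tau^3$. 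This is the a priori estimate made rigorous. It holds because $\bm{v}$ was built to match the stage recursion through order $\tau^2$, not because of any order condition.
\end{enumerate}

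\textbf{Step residual.} Next expand $\tau\bm{b}^\top[\cdots]$ through $O(\tau^3)$, which needs second-order Taylor terms with $O(\tau^4)$ remainder. This reproduces the displayed expansion of $u(t_{n+1})$.
\begin{enumerate}
\item Substitute $\bm{v}^{(1)}=f_1f_2\bm{c}$ and $\bm{v}^{(2)}=f_1f_2\,A\big(f_1'f_2D^{(1)}J\bm{c}+f_1f_2'D^{(2)}\bm{c}\big)$.
\item Collect each power of $\tau$ into elementary differentials multiplied by scalar coefficients from the tableau:
\begin{itemize}
\item the $\tau$ coefficient is $\bm{b}^\top\bm{1}$;
\item the $\tau^2$ coefficients are $\bm{b}^\top D^{(1)}J\bm{c}$ and $\bm{b}^\top D^{(2)}\bm{c}$;
\item the $\tau^3$ coefficients are the four ``$\bm{b}^\top D AD\bm{c}$'' products and the three quadratic terms from \eqref{prk_3rdordcon}.
\end{itemize}
\item Expand the exact Taylor series the same way, using $u_t$, $u_{tt}$, $u_{ttt}$:
\begin{itemize}
\item $u_{tt}/2=\tfrac12(f_1'f_2+f_1f_2')f_1f_2$;
\item $u_{ttt}/6$ splits into $\tfrac16(f_1'f_2+f_1f_2')^2f_1f_2$ plus $\tfrac16(f_1''f_2+2f_1'f_2'+f_1f_2'')(f_1f_2)^2$.
\end{itemize}
\item Match term by term. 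For example, the coefficient of $(f_1')^2f_2^2f_1f_2$ requires $\bm{b}^\top D^{(1)}JAD^{(1)}J\bm{c}=\tfrac16$. The coefficient of $f_1'f_2'(f_1f_2)^2$ requires $\bm{b}^\top(D^{(1)}J\bm{c}\circ D^{(2)}\bm{c})=\tfrac13$, since $2/6$ equals $1/3$.
\item Under \eqref{prk_1stordcon}--\eqref{prk_3rdordcon} all terms through $\tau^3$ cancel, leaving $\tau\eta_n=O(\tau^4)$, i.e. $\|\eta_n\|\le C\tau^3$.
\end{enumerate}

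\textbf{Main obstacle.} The delicate step is the $\tau^3$ bookkeeping. The cross term $f_1'(D^{(1)}J(\bm{v}-v_0\bm{1}))\circ f_2'(D^{(2)}(\bm{v}-v_0\bm{1}))$ must be kept, and its coefficient must be identified against $2f_1'f_2'u_t^2/6$. The mixed products must also be tracked: $f_1'f_2\cdot f_1f_2'$ appears in both $u_{ttt}$ terms and is matched by $\bm{b}^\top D^{(1)}JAD^{(2)}\bm{c}$ and $\bm{b}^\top D^{(2)}AD^{(1)}J\bm{c}$, each contributing $\tfrac16$. The elementary differentials are independent generically, so the matching is exactly what the order conditions state. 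It suffices to check carefully that every coefficient corresponds to one listed condition, and that the remainders are uniform because $\bm{\eta}=O(\tau^2)$ is absorbed, entering only at order $\tau\cdot\tau^2\cdot\tau$.
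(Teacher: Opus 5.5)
Your proposal is correct and follows essentially the same route as the paper. You check that the explicitly truncated stage expansion $\bm{v}$ from \eqref{ocasyv} reproduces the stage recursion up to $O(\tau^3)$, then Taylor-expand the update through $\tau^3$ and match coefficients against $u_t$, $u_{tt}$, $u_{ttt}$ to obtain \eqref{prk_1stordcon}--\eqref{prk_3rdordcon}.

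Two of your side remarks are slightly off but harmless. First, $\bm{v}$ is an explicit polynomial in $\tau$, so $\bm{\eta}$ never feeds back into $\eta_n$ and needs no absorbing. Second, for scalar $u$ the products $f_1'f_2\,f_1f_2'\,f_1f_2$ and $f_1'f_2'(f_1f_2)^2$ coincide, so these elementary differentials are not independent and only their combined coefficient must match ($\tfrac16+\tfrac16+\tfrac13=\tfrac23$). The stated conditions still guarantee this, since only sufficiency is claimed.
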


\subsection{Stability analysis}

In this section, we perform the absolute stability analysis of the PRK methods. Without loss of generality, we consider the ODE system
\begin{equation}\label{ODE_stability}
    u'=f(u/\varepsilon,u,h(u)),
\end{equation}
where the small parameter $\varepsilon$ represents the stiff term and the other terms are non-stiff. We analyze the stability of the PRK scheme 
\begin{equation}\label{Scheme_stability}
    \begin{aligned}
        &\bm{u} = u_0\bm{1} + \tau A f(D\bm{u}/\varepsilon,D^{(1)}\widehat{\bm{u}},D^{(2)}h(\widehat{\bm{u}})),\\
		&u^{n+1} = u_0 + \tau \bm{b}^\top f(D\bm{u}/\varepsilon,D^{(1)}\widehat{\bm{u}},D^{(2)}h(\widehat{\bm{u}})),
    \end{aligned}
\end{equation}
where the diagonal elements of $A,D$ satisfy $a_{ii},d_{ii}>0$.

To analyze the absolute stability of the PRK scheme \eqref{Scheme_stability}, it is necessary to derive a similar simplified equation, as traditional Runge-Kutta methods typically consider the ODE $u'=\lambda u$ with $\text{Re}(\lambda)<0$ for absolute stability analysis. Define $u = u^{(1)} - u^{(2)}$, where $u^{(1)}$ and $u^{(2)}$ satisfy the ODE system \eqref{ODE_stability}. Then we have
\begin{equation*}
\begin{aligned}
    u' =& f(u^{(1)}/\varepsilon,u^{(1)},h(u^{(1)}))-f(u^{(2)}/\varepsilon,u^{(2)},h(u^{(2)}))\\
    =&\partial_1 f(\xi_1/\varepsilon,\xi_2,h(\xi_3))\frac{u}{\varepsilon}+\partial_2 f(\xi_1/\varepsilon,\xi_2,h(\xi_3))u+\partial_3 f(\xi_1/\varepsilon,\xi_2,h(\xi_3))h'(\xi_3)u,
\end{aligned}
\end{equation*}
where $\xi_i$ is between $u^{(1)}$ and $u^{(2)}$ for $i=1,\,2,\,3$. If we assume $|\partial_i f|,|h'|\leqslant C$ and $C$ is independent of $\varepsilon$, we can analyze absolute stability properties of the PRK scheme \eqref{Scheme_stability} by considering the test ODE equation
\begin{equation}\label{ODE_A_stability}
    u'=\lambda_0 u + \lambda_1 u +\lambda_2 u,\quad t>0,
\end{equation}
with $\lambda_0,\lambda_1,\lambda_2\in \mathbb{C}$, where $\lambda_0 u$ corresponds to the stiff part and $\lambda_1u,\lambda_2u$ to the non-stiff parts. Applying the PRK scheme \eqref{Scheme_stability} to \eqref{ODE_A_stability}, we obtain the following additive IMEX-RK scheme
\begin{align}
        &u_0=u^n,\nonumber\\
        &u_i = u_0 +\tau \sum_{j=0}^i \hat{a}_{ij}\lambda_0 u_j + \tau \sum_{j=0}^i a^{(1)}_{ij}\lambda_1 u_j + \tau \sum_{j=0}^i a^{(2)}_{ij}\lambda_2 u_j,\quad i=0,\,\dots,\,s,\label{Astable_RK1}\\
        &u^{n+1} = u_0+\tau \sum_{i=0}^s \hat{b}_{i}\lambda_0 u_i +\tau \sum_{i=0}^s b^{(1)}_{i}\lambda_1 u_i+\tau \sum_{i=0}^s b^{(2)}_{i}\lambda_2 u_i,\label{Astable_RK2}
\end{align}
where the Butcher tableaux of the above IMEX-RK scheme satisfy
\begin{equation}\label{A_stable_tab}\renewcommand\arraystretch{1.2}
    \begin{aligned}
		\widehat{A}=(\hat{a}_{ij}):\,\begin{array}{c|cc}
			0 &0&\bm{0}^\top\\
			\bm{c} &\bm{0}&AD\\ \hline
			&0&\bm{b}^\top D
		\end{array},\,
		A^{(l)}=(a^{(l)}_{ij}):\,\begin{array}{c|cc}
			0 &\bm{0}^\top&0\\
			\bm{c} &AD^{(l)}&\bm{0}\\ \hline
			&\bm{b}^\top D^{(l)}&0
		\end{array},\;l=1,\,2,
	\end{aligned}
\end{equation}
with $\hat{\bm{b}}=(\hat{b}_i)=(0,\bm{b}^\top D)^\top$, $\bm{b}^{(l)}=(\hat{b}_i^{(l)})=(\bm{b}^\top D^{(l)},0)^\top\in \mathbb{R}^{(s+1)}$, and $\widehat{A},A^{(l)}\in \mathbb{R}^{(s+1)\times (s+1)}$ for $l=1,\,2$. 
Therefore, the discussion of absolute stability for PRK schemes is reduced to examining the absolute stability of additive IMEX-RK schemes. Although we can separately analyze the stability of each part of Runge-Kutta coefficients in \eqref{A_stable_tab}, this is not sufficient to guarantee desirable stability properties of the overall IMEX-RK scheme \cite{Hundsdorfer2003}.
The absolute stability of the additive IMEX-RK scheme with two Runge-Kutta tableaux was analyzed in \cite{Pareschi2000,IZZO201771}, which is similar to the PRK schemes. Therefore, we only give a brief absolute stability analysis of the PRK scheme \eqref{Scheme_stability}.

Denote $z_0=\tau \lambda_0,\,z_1=\tau \lambda_1,\,z_2=\tau \lambda_2$. Solving $u_i$ by \eqref{Astable_RK1} and substituting in \eqref{Astable_RK2}, we have 
\begin{equation*}
\begin{aligned}
    &u^{n+1}=\mathcal{R}(z_0,z_1,z_2)u^n\\
    =& \left[1+(z_0\hat{\bm{b}}+z_1\bm{b}^{(1)}+z_2\bm{b}^{(2)})^\top(I-z_0\widehat{A}-z_1 A^{(1)}-z_2 A^{(2)})^{-1}\bm{1}\right]u^n.
\end{aligned}
\end{equation*}
The function $\mathcal{R}(z_0,z_1,z_2)$ is the \textit{function of absolute stability}. The \textit{region of absolute stability} $\mathcal{A}$ associated with PRK scheme \eqref{Scheme_stability} is defined as
\begin{equation}
    \mathcal{A} = \Big\{(z_0,z_1,z_2)\in \mathbb{C}^3:|\mathcal{R}(z_0,z_1,z_2)|\leqslant 1\Big\}.
\end{equation}
We wish the PRK scheme is $A(\alpha)$- or $A$-stable with respect to the stiff and implicit part. Then we define the region of absolute stability of the explicit parts assuming that the implicit part is $A(\alpha)$-stable ($A$-stable if $\alpha=\frac{\pi}{2}$) by
\begin{equation}\label{S_alpha}
    \mathcal{S}_\alpha = \Big\{(z_1,z_2)\in \mathbb{C}^2:|\mathcal{R}(z_0,z_1,z_2)|\leqslant 1\text{ for }z_0\in \mathcal{A}_\alpha\Big\},
\end{equation}
where $\mathcal{A}_\alpha = \{z\in\mathbb{C}^{-}:|\text{Im}(z)|\leqslant \tan(\alpha)|\text{Re}(z)|\}$ for $\alpha\in (0,\frac{\pi}{2}]$ and $\mathbb{C}^{-}= \{z\in\mathbb{C}:\text{Re}(z)<0\}$. This implies that $\mathcal{A}_\alpha \times \mathcal{S}_\alpha$ is a subset of the region of absolute stability $\mathcal{A}$. Furthermore, if $\limsup_{z_0\rightarrow\infty}|\mathcal{R}(z_0,z_1,z_2)|<1$, we can prove that 
\begin{equation*}
    \mathcal{S}_\alpha = \bigcap_{y\in\mathbb{R}} \mathcal{S}_{y,\alpha}=\bigcap_{y\in\mathbb{R}} \Big\{(z_1,z_2)\in \mathbb{C}^2:|\mathcal{R}(z_0,z_1,z_2)|\leqslant 1\text{ for }z_0=-|y|/\tan(\alpha)+iy\Big\}
\end{equation*}
by using the maximum modulus principle. The proof is exactly the same as the proof in \cite{Pareschi2000}.

\begin{remark}

In practice, the assumption that $\lambda_1$ and $\lambda_2$ are non-stiff of \eqref{ODE_A_stability} does not always hold. For instance, consider
\begin{equation}\label{A_stab_pro}
u_t=f(u/\varepsilon,u,h(u))=\frac{u}{\varepsilon}g(u,h(u)).
\end{equation}
In such cases, $\lambda_1u$ and $\lambda_2u$ remain stiff. The time step restriction $\tau \leqslant C\varepsilon$ is necessary to ensure absolute stability. How to better analyze the absolute stability of \eqref{A_stab_pro} requires further exploration.

Nevertheless, the absolute stability discussed herein is not a strictly necessary condition for the overall stability of the numerical method. In other words, a numerical method
lacking absolute stability in this specific sense does not necessarily exhibit unstable numerical behavior in practice. By judiciously exploiting the underlying physical properties of the equations, we can design structure-preserving algorithms that circumvent overly restrictive time step constraints. Even though equation \eqref{Gradient Flow} is of the form \eqref{A_stab_pro}, we successfully develop an unconditionally stable numerical scheme utilizing a product-type IMEX-RK method.
\end{remark}

\section{Numerical scheme}\label{sec3}

    In this section, we introduce a PRK scheme for the constrained gradient flow equation \eqref{Gradient Flow}. Subsequently, we derive the conditions of preserving both energy dissipation and unit length for the proposed PRK scheme. We consider either time-independent Dirichlet boundary conditions or homogeneous Neumann boundary conditions. A second-order PRK tableau is given. We denote by $(\cdot,\cdot)$ the $L^2$-inner product and by $\|\cdot\|$ the $L^2$-norm.
    \subsection{Review of unit length and energy dissipative preserving methods}
    
    As discussed in the introduction, balancing the dual preservation of unit length and energy dissipation with computational efficiency is a primary challenge. Before presenting our proposed PRK framework, we briefly review two representative decoupled methods that maintain these dual structures: the first-order linear SIP1 scheme \cite{du2024} and the second-order weakly nonlinear LM2 scheme \cite{cheng2023length}. These two methods serve as the primary baselines for our subsequent efficiency and robustness comparisons.

    \subsubsection{Semi-implicit projection method} 
    The projection method is widely used to maintain the unit length. However, the energy is altered after projection. Based on the projection property \cite{Alouges1997,Alouges2008,du2024}, 
    \begin{equation}
        \label{Pe}
			\left|\nabla \left(\frac{\tilde{\bm{m}}}{|\tilde{\bm{m}}|}\right)\right|\leqslant \left|\nabla \tilde{\bm{m}}\right|,\quad \text{provided } |\tilde{\bm{m}}|\geqslant 1\, \text{a.e.},
	\end{equation}
    the projection is energy decreasing whenever the length before projection is no less than 1. The discrete version of \eqref{Pe} for finite element approximation  was proved by Bartels in \cite{bartels2005stability}.
    
    \textbf{First-order semi-implicit projection (SIP1) method \cite{Alouges2008,du2024}:}
    
    \textbf{Step 1} (Predictor): given $\bm{m}^n$, compute $\bm{\tilde{m}}^{n+1}$ from
\begin{equation*}
	\frac{\tilde{\bm{m}}^{n+1} - \bm{m}^n}{\tau}= P(\bm{m}^n)\left( \theta \Delta (\tilde{\bm{m}}^{n+1}-\bm{m}^n) +\Delta \bm{m}^n\right),
\end{equation*}
where $\frac{1}{2}\leqslant\theta\leqslant 1$.

    \textbf{Step 2} (Projection): perform projection by
\begin{equation*}
    \bm{m}^{n+1} = \frac{\tilde{\bm{m}}^{n+1}}{|\tilde{\bm{m}}^{n+1}|}.
\end{equation*}
Note that the SIP1 method obtains a predicted solution along a direction perpendicular to $\bm{m}^n$, that is, $|\tilde{\bm{m}}^{n+1}|\geqslant 1$. Therefore, the law of energy dissipation $\|\nabla \bm{m}^{n+1}\|\leqslant \|\nabla \tilde{\bm{m}}^{n+1}\|\leqslant \|\nabla \bm{m}^{n}\|$ is satisfied.

\begin{remark}
    The projection post-process is efficient and convenient. It is worth noting that projecting the numerical solution onto the unit sphere does not affect the accuracy, regardless of the order of the numerical algorithm. For example, consider a numerical method of $p$-th order accuracy. Its local truncation error satisfies $\bm{m}^n = \bm{m}(t_n) + O(\tau^{p+1})$. We then have
    $$|\bm{m}^n|^2 = |\bm{m}(t_n)|^2 + O(\tau^{p+1}) = 1 + O(\tau^{p+1}).$$
Therefore,
$$\frac{\bm{m}^n}{|\bm{m}^n|} = \frac{\bm{m}(t_n) + O(\tau^{p+1})}{\sqrt{1 + O(\tau^{p+1})}} = \bm{m}(t_n) + O(\tau^{p+1}).$$
Furthermore, \cite{du2024} proved that the error also decreases when $|\bm{m}^n| \geqslant 1$, i.e. $$\left| \frac{\bm{m}^n}{|\bm{m}^n|} - \bm{m}(t_n) \right| \leqslant |\bm{m}^n - \bm{m}(t_n)|.$$ Relevant literature that provides rigorous error estimates for this projection technique can refer to \cite{chen2021convergence, an2021optimal, gui2022convergence, Gui2024} and \cite{Li2026} for SIP1 method.
\end{remark}

    \subsubsection{Lagrange multiplier method} 
    The Lagrange multiplier approach introduces two Lagrange multipliers: $\lambda(\bm{x},t)$, a scalar, and $\bm{\xi}(t)$, a space-independent vector. With these, equation \eqref{Gradient Flow} can be rewritten as
    \begin{equation}
    \label{lm}
        \begin{aligned}
            &\bm{m}_t=\alpha\left(\Delta \bm{m}+\lambda(\bm{x},t)\bm{m}\right),\\
            &\bm{m} = \frac{\bm{m}+\bm{\xi}(t)}{|\bm{m}+\bm{\xi}(t)|},\\
            &\frac{\mathrm{d}}{\mathrm{d} t}E(\bm{m}) = -\alpha\int_{\Omega} |\bm{m}\times \Delta \bm{m}|^2\mathrm{d}\bm{x},
        \end{aligned}
    \end{equation}
    where $E(\bm{m}) =\int_{\Omega} \frac{1}{2}|\nabla \bm{m}|^2 \, \mathrm{d}\bm{x}$.
    For simplicity, we only introduce the case $\beta = 0$. The case for $\beta \neq 0$ can be found in \cite{cheng2023length}. It is evident that equations \eqref{lm} and \eqref{Gradient Flow} are equivalent, since $\bm{\xi}(t) = 0$ whenever $\bm{m}$ is not spatially constant, and $\lambda(\bm{x},t) = -\bm{m}\cdot\Delta \bm{m}$.
    
    \textbf{Second-order Lagrange multiplier (LM2) method \cite{cheng2023length}:}
    
    \textbf{Step 1} (Predictor): given $(\bm{m}^n,\lambda^n)$, compute $\tilde{\bm{m}}^{n+1}$ from
    \begin{align*}
        \frac{\tilde{\bm{m}}^{n+1}-\bm{m}^n}{\tau}=\alpha\left(\Delta \frac{\tilde{\bm{m}}^{n+1}+\tilde{\bm{m}}^n}{2}+\lambda^n \bm{m}^n\right),
    \end{align*}

    \textbf{Step 2} (Corrector): compute $(\hat{\bm{m}}^{n+1},\lambda^{n+1})$ from
    \begin{align*}
        &\frac{\hat{\bm{m}}^{n+1}-\tilde{\bm{m}}^{n+1}}{\tau}=\alpha\frac{\lambda^{n+1} \hat{\bm{m}}^{n+1}-\lambda^n \bm{m}^n}{2},\\
        &\left|\hat{\bm{m}}^{n+1}\right|=1,
    \end{align*}

    \textbf{Step 3} (Preserving energy dissipation): compute $(\bm{m}^{n+1},\bm{\xi}^{n+1})$ from
    \begin{align*}
        & \bm{m}^{n+1}=\frac{\hat{\bm{m}}^{n+1}+\bm{\xi}^{n+1}}{\left|\hat{\bm{m}}^{n+1}+\bm{\xi}^{n+1}\right|}, \\
& \frac{E^{n+1}-E^n}{\tau}=-\alpha\left\|\frac{\hat{\bm{m}}^{n+1}+\bm{m}^n}{2} \times \Delta \frac{\hat{\bm{m}}^{n+1}+\bm{m}^n}{2}\right\|^2,
    \end{align*}
where the energy approximation is $E^{n+1}=E[\bm{m}^{n+1}]=\int_{\Omega} \frac{1}{2}|\nabla \bm{m}^{n+1}|^2 \, \mathrm{d}\bm{x}$. 
Note that Step 3 requires solving a nonlinear algebraic equation. However, such a nonlinear algebraic equation cannot ensure solvability.

   
	\subsection{Product-type IMEX-RK schemes for \eqref{Gradient Flow}}
    Based on the idea of the SIP1 scheme, we extend the numerical method of preserving length and energy dissipation to higher orders. Consider the Butcher tableaux of $A = (a_{ij})_{s\times s}$ and $D = (d_{ij})_{s\times s}$ as follows:	
	\begin{equation}
		\begin{aligned}
		A:\,\begin{array}{c|cccc}
			c_1 &a_{11}&0&\cdots &0\\
			c_2 &a_{21}&a_{22}&\ddots &\vdots\\
			\vdots&\vdots&&\ddots&0\\
			c_s &a_{s1}&a_{s2}&\cdots &a_{ss}\\ \hline
			&b_1&b_2&\cdots &b_s
		\end{array},\,
		D:\,\begin{array}{c|cccc}
			1 &d_{11}&0&\cdots &0\\
			1 &d_{21}&d_{22}&\ddots &\vdots\\
			\vdots&\vdots&&\ddots&0\\
			1 &d_{s1}&d_{s2}&\cdots &d_{ss}\\ \hline
			&&& &
		\end{array},
	\end{aligned}
	\end{equation}
	where $c_i = \sum_{j=1}^i a_{ij}$ , $1 = \sum_{j=1}^i d_{ij}$ and $a_{ii},d_{ii}>0$, $i=1,\,2,\,\dots,\,s$.
	
	To determine $\bm{m}^{n+1}$ given $\bm{m}^n$, we design the PRK schemes as:
	\begin{equation}
		\label{PRK}
		\begin{aligned}
			&\bm{u}_0  =\bm{m}^n, \\
            &\bm{u}_i  = \bm{u}_0 +\tau \sum_{j=1}^i a_{ij} P(\bm{u}_{j-1})\sum_{k=1}^j d_{jk} \Delta \bm{u}_k,\quad i=1,\,2,\,\cdots,\,s,\\
			&\tilde{\bm{m}}^{n+1}  =\bm{u}_0 +\tau \sum_{j=1}^s b_{j} P(\bm{u}_{j-1})\sum_{k=1}^j d_{jk} \Delta \bm{u}_k,\\
			&\bm{m}^{n+1} = \frac{\tilde{\bm{m}}^{n+1}}{\left|\tilde{\bm{m}}^{n+1}\right|}.
		\end{aligned}
	\end{equation}
	It is easy to check that the projection $\frac{\tilde{\bm{m}}^{n+1}}{\left|\tilde{\bm{m}}^{n+1}\right|}$ preserves the Dirichlet boundary conditions and homogeneous Neumann boundary conditions.
\subsection{Energy dissipation and unit length preservation}
    In this subsection, we establish the energy dissipation conditions for the PRK scheme \eqref{PRK}. The key idea is to ensure $\| \nabla\tilde{\bm{m}}^{n+1} \|\leqslant\|  \nabla\bm{m}^n\|$ and $|\tilde{\bm{m}}^{n+1} |\geqslant| \bm{m}^n|$. Then, we use the projection property \eqref{Pe}
		to obtain the energy dissipation $\| \nabla\bm{m}^{n+1} \|\leqslant\|  \nabla\bm{m}^n\|$.
    

    \begin{theorem}\label{mainthm}
		Consider the product-type IMEX-RK scheme \eqref{PRK}. If $b_i\geqslant0$ and the matrices $Q = (q_{ij})_{s\times s}$ \eqref{Q_matrix} and $R = (r_{ij})_{s\times s}$ \eqref{R_matrix} are positive semi-definite, then the product-type IMEX-RK scheme  \eqref{PRK} preserves $|\bm{m}^{n+1}| = 1$ and the energy dissipation law
		\begin{equation}
			\int_\Omega \frac{1}{2} \left|\nabla \bm{m}^{n+1}\right| \mathrm{d}\bm{x}\leqslant \int_\Omega \frac{1}{2} \left|\nabla \bm{m}^{n}\right|\mathrm{d}\bm{x}.
		\end{equation}
	\end{theorem}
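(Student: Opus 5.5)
The plan is to work with the stage increments. Set $\bm{w}_j:=\sum_{k=1}^j d_{jk}\Delta\bm{u}_k$ and $\bm{\delta}_j:=P(\bm{u}_{j-1})\bm{w}_j$, so that \eqref{PRK} reads $\bm{u}_i=\bm{u}_0+\tau\sum_{j\le i}a_{ij}\bm{\delta}_j$ and $\tilde{\bm{m}}^{n+1}=\bm{u}_0+\tau\sum_j b_j\bm{\delta}_j$. I would prove two pointwise/integral inequalities before projection, namely $|\tilde{\bm{m}}^{n+1}|\geqslant 1$ a.e. and $\|\nabla\tilde{\bm{m}}^{n+1}\|\leqslant\|\nabla\bm{m}^n\|$. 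Then $|\bm{m}^{n+1}|=1$ holds trivially, and the projection property \eqref{Pe} gives $\|\nabla\bm{m}^{n+1}\|\leqslant\|\nabla\tilde{\bm{m}}^{n+1}\|\leqslant\|\nabla\bm{m}^n\|$. I expect $Q$ \eqref{Q_matrix} and $R$ \eqref{R_matrix} to be exactly the matrices appearing in the quadratic forms below. If the paper's definitions differ by the ordering convention, I would adjust the identification rather than the argument.

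\emph{Length.} I use $\bm{u}_{j-1}\cdot P(\bm{u}_{j-1})\bm{v}=0$ for every $\bm{v}$; this holds both for the projector part and for the cross-product part. It gives $\bm{u}_0\cdot\bm{\delta}_j=(\bm{u}_0-\bm{u}_{j-1})\cdot\bm{\delta}_j=-\tau\sum_l a_{j-1,l}\,\bm{\delta}_l\cdot\bm{\delta}_j$, with the convention that the $j=1$ term vanishes. Hence, pointwise,
\[
|\tilde{\bm{m}}^{n+1}|^2-|\bm{m}^n|^2=2\tau\sum_j b_j\,\bm{u}_0\cdot\bm{\delta}_j+\tau^2\Big|\sum_j b_j\bm{\delta}_j\Big|^2=\tau^2\sum_{i,j}r_{ij}\,\bm{\delta}_i\cdot\bm{\delta}_j ,
\]
where $R=\bm{b}\bm{b}^\top-BJA-(BJA)^\top$ and $B=\mathrm{diag}(\bm{b})$. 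Positive semi-definiteness of $R$, applied componentwise to the vectors $\bm{\delta}_i(\bm{x})\in\mathbb{R}^3$, gives $|\tilde{\bm{m}}^{n+1}|\geqslant|\bm{m}^n|=1$.

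\emph{Energy.} I would expand
\[
\|\nabla\tilde{\bm{m}}^{n+1}\|^2-\|\nabla\bm{u}_0\|^2=-2\tau\sum_j b_j(\Delta\bm{u}_0,\bm{\delta}_j)+\tau^2\Big\|\sum_j b_j\nabla\bm{\delta}_j\Big\|^2 .
\]
The integration by parts is legitimate because every $\bm{\delta}_j$ is a difference of stages, and stages carry the same time-independent Dirichlet data (or homogeneous Neumann data), so boundary terms vanish. The key step is to rewrite $\Delta\bm{u}_0$ through $\bm{w}_j$. Since each row of $D$ sums to $1$, we have $\bm{w}_j=\Delta\bm{u}_0+\tau\sum_l (DA)_{jl}\Delta\bm{\delta}_l$. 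Therefore
\[
-(\Delta\bm{u}_0,\bm{\delta}_j)=-(\bm{w}_j,P(\bm{u}_{j-1})\bm{w}_j)-\tau\sum_l(DA)_{jl}(\nabla\bm{\delta}_l,\nabla\bm{\delta}_j).
\]
The first term equals $-\alpha\|(I-\bm{u}_{j-1}\bm{u}_{j-1}^\top/|\bm{u}_{j-1}|^2)\bm{w}_j\|^2\leqslant0$, because the $\beta$ cross-product term is skew. Collecting terms gives
\[
\|\nabla\tilde{\bm{m}}^{n+1}\|^2-\|\nabla\bm{m}^n\|^2=-2\tau\alpha\sum_j b_j\|(I-\cdots)\bm{w}_j\|^2-\tau^2\sum_{i,j}q_{ij}(\nabla\bm{\delta}_i,\nabla\bm{\delta}_j),
\]
with $Q=BDA+(BDA)^\top-\bm{b}\bm{b}^\top$. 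Then $b_j\geqslant0$ and $Q\succeq0$ give the energy decrease.

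The main obstacle is the bookkeeping in the energy step. Everything hinges on substituting the implicit-stage identity $\Delta\bm{u}_0=\bm{w}_j-\tau(DA\Delta\bm{\delta})_j$ correctly, so that the non-sign-definite cross terms collapse into the single symmetric form given by $Q$, and on checking that the row-sum condition $D\bm{1}=\bm{1}$ and the index shift $J$ in the length step line up with the paper's definitions of $Q$ and $R$. Solvability of each linear stage is not needed for the statement. If it were needed, it would follow from $a_{ii}d_{ii}>0$ and the positive semi-definiteness of the symmetric part of $P$.
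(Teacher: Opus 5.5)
Your proposal is correct and follows the paper's proof essentially step for step. You use the same row-sum substitution $\Delta\bm u_0=\Delta\tilde{\bm u}_j-\tau\sum_l(DA)_{jl}\Delta\bm\delta_l$ to produce $Q=BDA+(BDA)^\top-\bm b\bm b^\top$, and the same index shift $\bm u_0=\bm u_{j-1}-\tau\sum_l a_{j-1,l}\bm\delta_l$ with $\bm u\cdot P(\bm u)\bm v=0$ to produce $R=\bm b\bm b^\top-BJA-(BJA)^\top$. You then close with the projection property \eqref{Pe}. Your remark that $\bm\delta_j$ has the right boundary behaviour is precisely the paper's identity $\bm\delta_j=\sum_k a^{jk}(\bm u_k-\bm u_0)/\tau$, which needs $A$ invertible; that holds since $a_{ii}>0$. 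You also make explicit the sign $(\bm w_j,P(\bm u_{j-1})\bm w_j)\geqslant 0$, which the paper uses tacitly.
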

\begin{proof}
 For the convenience of reading, we denote $\tilde{\bm{u}}_i = \sum_{j=1}^i d_{i j} \bm{u}_j$ for $1\leqslant i\leqslant s$ and rewrite scheme \eqref{PRK} as
   	\begin{equation}
		\label{PRKm}
		\begin{aligned}
			&\bm{u}_0  =\bm{m}^n, \\
			&\bm{u}_i  =\bm{u}_0 +\tau\sum_{j=1}^i a_{ij}P(\bm{u}_{j-1}) \Delta \tilde{\bm{u}}_j,\quad i=1,\,2,\,\cdots,\,s,\\
			&\bm{m}^{n+1} = \frac{\tilde{\bm{m}}^{n+1}}{\left|\tilde{\bm{m}}^{n+1}\right|},\;\tilde{\bm{m}}^{n+1}  =\bm{u}_0 +\tau\sum_{j=1}^s b_{j}P(\bm{u}_{j-1}) \Delta \tilde{\bm{u}}_j.
		\end{aligned}
	\end{equation}
    
    Note that
    \begin{equation}\label{pu_rep}
        P(\bm{u}_{i-1})\Delta \tilde{\bm{u}}_i=\sum_{j=1}^i a^{ij}(\bm{u}_j-\bm{u}_0)/\tau,
    \end{equation}
    where $A$ is invertible and $A^{-1} = (a^{ij})_{s\times s}$. The boundary of $P(\bm{u}_{i-1})\Delta \tilde{\bm{u}}_i$ is the same as the boundary of $\sum_{j=1}^s a^{ij}(\bm{u}_j-\bm{u}_0)/\tau$. Therefore, we have integration by parts $\left(\nabla (P(\bm{u}_{i-1})\Delta \tilde{\bm{u}}_i),\bm{v}\right)  = -\left(P(\bm{u}_{i-1})\Delta \tilde{\bm{u}}_i,\nabla\cdot\bm{v}\right)$.
    Then it is easy to derive
	\begin{equation}\label{enmt}
	\begin{aligned}
		\left(\nabla \tilde{\bm{m}}^{n+1},\nabla\tilde{\bm{m}}^{n+1}\right) =& \left(\nabla\bm{u}_0 ,\nabla\bm{u}_0 \right) - 2\tau \sum_{i=1}^s b_i \left( P(\bm{u}_{i-1})\Delta \tilde{\bm{u}}_i,\Delta \bm{u}_0 \right) \\
		&+\tau^2 \sum_{i,j=1}^s b_i b_j \left(P(\bm{u}_{i-1})\Delta \tilde{\bm{u}}_i,(-\Delta) P(\bm{u}_{j-1})\Delta \tilde{\bm{u}}_j\right).
	\end{aligned}
		\end{equation} 
    On the other hand, we also have 
   	\begin{equation}\label{u0r}
	\bm{u}_0 =\sum_{j=1}^i d_{ij}\bm{u}_0= \tilde{\bm{u}}_i -\tau \sum_{j,k=1}^i d_{i k} a_{kj} P(\bm{u}_{j-1})\Delta \tilde{\bm{u}}_j,\quad i=1,\,2,\,\dots,\,s,
	\end{equation}
where we used the compatible condition $\sum_{j=1}^i d_{ij}=1$. 
	Substituting \eqref{u0r} into the second term on the right-hand side of \eqref{enmt} yields
	\begin{align*}
		\left(\nabla \tilde{\bm{m}}^{n+1},\nabla\tilde{\bm{m}}^{n+1}\right) =& \left(\nabla\bm{u}_0 ,\nabla\bm{u}_0 \right) - 2\tau \sum_{i=1}^s b_i\left(  P(\bm{u}_{i-1})\Delta \tilde{\bm{u}}_i,\Delta \tilde{\bm{u}}_i \right) \\
		&-\tau^2 \sum_{i,j=1}^s q_{ij} \left(P(\bm{u}_{i-1})\Delta \tilde{\bm{u}}_i,(-\Delta) P(\bm{u}_{j-1})\Delta \tilde{\bm{u}}_j\right),
	\end{align*}
	where
	\begin{equation}
		\label{Q_matrix}
        \begin{aligned}
            Q=&(q_{ij})_{s\times s} = \left(\sum_{k=1}^i b_i d_{i k} a_{kj}+\sum_{k=1}^j b_j d_{j k} a_{ki}-b_ib_j\right)_{s\times s}\\
            =&BDA + (BDA)^\top -\bm{b}\bm{b}^\top ,\quad B = \text{diag}(\bm{b}).
        \end{aligned}
	\end{equation}
    Therefore, when the matrix $Q = (q_{ij})_{s\times s}$ is positive semi-definite and $b_i\geqslant 0$, we have $\|\nabla\tilde{\bm{m}}^{n+1} \|\leqslant\| \nabla\bm{m}^n\|$.
	
	Similarly, for $|\tilde{\bm{m}}^{n+1}|$, we have
	\begin{align*}
		\tilde{\bm{m}}^{n+1}\cdot\tilde{\bm{m}}^{n+1} =& \bm{u}_0 \cdot\bm{u}_0  + 2\tau \sum_{i=1}^s b_i [ P(\bm{u}_{i-1})\Delta \tilde{\bm{u}}_i ] \cdot \bm{u}_0 \\
		&+\tau^2 \sum_{i,j=1}^s b_i b_j[ P(\bm{u}_{i-1})\Delta \tilde{\bm{u}}_i]\cdot [P(\bm{u}_{j-1})\Delta \tilde{\bm{u}}_j].
	\end{align*}
	Substituting $\bm{u}_0 = \bm{u}_i -\tau \sum_{j=1}^i  a_{ij} P(\bm{u}_{j-1})\Delta \tilde{\bm{u}}_j$
    into the above equation gives
	\begin{align*}
		\tilde{\bm{m}}^{n+1}\cdot\tilde{\bm{m}}^{n+1}
		=& \bm{u}_0 \cdot\bm{u}_0 + 2\tau \sum_{i=1}^s b_i [P(\bm{u}_{i-1})\Delta \tilde{\bm{u}}_i]\cdot \bm{u}_{i-1}\\
		&+\tau^2 \sum_{i,j=1}^s r_{ij}[ P(\bm{u}_{i-1})\Delta \tilde{\bm{u}}_i]\cdot[ P(\bm{u}_{j-1})\Delta \tilde{\bm{u}}_j]\\
        =&\bm{u}_0 \cdot\bm{u}_0 +\tau^2 \sum_{i,j=1}^s r_{ij} [P(\bm{u}_{i-1})\Delta \tilde{\bm{u}}_i]\cdot [P(\bm{u}_{j-1})\Delta \tilde{\bm{u}}_j],
	\end{align*}
	where
	\begin{equation}
		\label{R_matrix}
        \begin{aligned}
		R=&(r_{ij})_{s\times s} = \left(b_ib_j-b_i a_{i-1, j}-b_j a_{j-1,i}\right)_{s\times s}\\
        =&\bm{b}\bm{b}^\top  - BJA - (BJA)^\top 
        \end{aligned}
	\end{equation}
    with $a_{0j}=0$.
    Here we use $\bm{u}\cdot P(\bm{u})=\bm{0}$.
	Therefore, when the matrix $R = (r_{ij})_{s\times s}$ is positive semi-definite, we have $|\tilde{\bm{m}}^{n+1} |\geqslant| \bm{m}^n|$. According to \eqref{Pe}, we obtain the energy dissipation.
    \end{proof}
     
    \begin{remark}
        Similarly, we can prove the fully discrete unconditionally energy dissipation by using central finite difference method in Section \ref{sec4.1} or finite element method in \cite{bartels2005stability}, especially with the help of the discrete version of relation \eqref{pu_rep}. Actually, for the central finite difference method in Section \ref{sec4.1}, we can consider the identity relation
        \begin{equation}
            \bm{U}^i = \bm{U}^0 + \tau \sum_{j=1}^i a_{ij} \left(\sum_{k=1}^ja^{jk}\frac{\bm{U}^k-\bm{U}^0}{\tau}\right)
        \end{equation}
        and then derive the following equation
        \begin{equation}\label{eq3.13}
            \begin{aligned}
             &(\widetilde{\bm{M}}^{n+1},-D_h^{(3)}\widetilde{\bm{M}}^{n+1})_h \\
               =& (\bm{U}^0,-D_h^{(3)}\bm{U}^0)_h - 2\tau\sum_{i=1}^s b_i \left(\sum_{j=1}^i a^{ij}(\bm{U}^j-\bm{U}^0)/\tau,-D_h^{(3)}\sum_{j=1}^i d_{ij} \bm{U}_j\right)\\
               &-\tau^2\sum_{i,j=1}^sq_{ij}\left(\sum_{k=1}^i a^{ik}(\bm{U}^k-\bm{U}^0)/\tau,-D_h^{(3)}\sum_{k=1}^j a^{jk}(\bm{U}^k-\bm{U}^0)/\tau\right)_h
            \end{aligned}
        \end{equation}
        by using the same proof methodology for the semi-discrete case. Here, the discrete inner product $(\cdot,\cdot)_h$ is computed by the trapezoidal rule. Substituting the discrete version of relation \eqref{pu_rep} $$P(\bm{U}^{i-1})\sum_{j=1}^id_{ij}D^{(3)}_h\bm{U}^{j} = \sum_{j=1}^i a^{ij}(\bm{U}^j-\bm{U}^0)/\tau$$ into the second term of the right hand side of equation \eqref{eq3.13}, we obtain the fully discrete energy dissipation before projection $E_h(\widetilde{\bm{M}}^{n+1})\leqslant E_h(\bm{U}^0)$, where the discrete energy is defined by \eqref{eq4.1a}. Similarly, we can derive $\widetilde{\bm{M}}^{n+1}\cdot\widetilde{\bm{M}}^{n+1}\geqslant\bm{U}^0\cdot\bm{U}^0$. Then the fully discrete unconditionally energy dissipation follows from the discrete projection property \cite{du2024}:
        \begin{equation*}
        \begin{aligned}
            &((M^n_i)_{jk}-(M^n_i)_{(j-1)k})^2+((M^n_i)_{jk}-(M^n_i)_{j(k-1)})^2\\
            \leqslant&((\widetilde{M}^n_i)_{jk}-(\widetilde{M}^n_i)_{(j-1)k})^2+((\widetilde{M}^n_i)_{jk}-(\widetilde{M}^n_i)_{j(k-1)})^2].
        \end{aligned}
        \end{equation*}
        Although the nonlinear scheme in \cite{Fuwa2012} also satisfies the energy dissipation property, the CFL constraint essentially originates from the unique solvability of their nonlinear scheme. In contrast, the solvability of our linear scheme does not suffer any restriction on  the time step $\tau$ and the spatial step $h$. Consequently, there is no extra restriction to the time step $\tau$ and the spatial step $h$ required to guarantee stability, and our scheme is free from CFL constraints.
    \end{remark}

\subsection{A second-order PRK scheme}
	According to Theorem \ref{mainthm}, the Butcher tableau of $A$ and $D$ in a second-order unit length preserving and energy dissipative PRK scheme is given by
	\begin{equation}
    \renewcommand\arraystretch{1.5}\label{PRK2table}
		A = (a_{ij})_{2\times 2}: \begin{tabular}{c|cc}  $1$  & $1$ & 0  \\ $\frac{1}{2}$  & $0$ & $\frac{1}{2}$ \\  \hline & $\frac{1}{2}$ & $\frac{1}{2}$  \end{tabular}
		\qquad D = (d_{ij})_{2\times 2}: 
		\begin{tabular}{c|cccc}  $1$  & $1$ &0 \\ $1$  & $-1$ & $2$  \\ \hline &  &  \end{tabular}.
	\end{equation}
    Note that this Butcher tableau of $A$ and $D$ is not stiffly accurate, i.e., $b_j\neq a_{sj}$. Therefore, it is not L-stable. However, three-stage third-order PRK schemes do not exist. The existence of a high-stage third-order PRK scheme is open.
    
    \begin{property}
    Under constraints $b_i\geqslant 0$, $Q\geqslant 0$ and $R\geqslant 0$, do not exist two-stage second-order stiffly accurate product-type IMEX-RK schemes and three-stage third-order product-type IMEX-RK schemes.
    \end{property}
    \begin{proof}
        Observe that $\bm{1}^\top Q\bm{1}= 0 $ and $\bm{1}^\top R\bm{1}= 0$ by the second-order condition. Since $Q$ and $R$ are positive semi-definite, we find $\bm{1}$ is an eigenvector with eigenvalue zero of the matrices $Q$ and $R$, i.e., $Q\bm{1}=R\bm{1}= 0\cdot \bm{1}=\bm{0}$.

        For two-stage second-order PRK schemes, we obtain
        $b_1=b_2=\frac{1}{2},\; a_{11}=1$
        by $R\bm{1}=\bm{0}$ and the order conditions. If the IMEX-RK scheme is stiffly accurate, then $a_{21}=a_{22}=\frac{1}{2}$. Taking it into $Q\bm{1}=\bm{0}$, we have $d_{21}=-2$ and $d_{22}=2$ that violate $d_{21}+d_{22}=1$. Therefore, there is no two-stage second-order stiffly accurate PRK scheme.

        For three-stage third-order PRK schemes, we have
        $c_2 = 1,\; b_2c_1+b_3c_2 = \frac{1}{2}$
        by $R\bm{1}=\bm{0}$ and the second-order condition. Combining with the third-order condition
        $b_3c_2a_{11} = \frac{1}{6},\,b_2c_1^2+b_3c_2^2=\frac{1}{3},$
        we derive $ 12b_3^2-6b_3+1=0$.
        However, this quadratic equation does not have real solutions. Thus, there is no three-stage third-order PRK scheme.
    \end{proof}
    \begin{remark}
        For general nonlinear energy
        \begin{equation}
            E[\bm{m}(\bm{x})] = \int_{\Omega} \frac{1}{2}|\nabla \bm{m}|^2\, \mathrm{d}\bm{x}+E_1[\bm{m}(\bm{x})] ,\quad  E_1[\bm{m}(\bm{x})] =\int_{\Omega}F(\bm{m})\, \mathrm{d}\bm{x}
        \end{equation}
        the evolution equation is given by
        \begin{equation}
            \begin{aligned}
                 &\bm{m}_t = - P(\bm{m})\bm{\mu},\\
                 &\bm{\mu} = \frac{\delta E}{\delta \bm{m}} = -\Delta \bm{m} + F'(\bm{m}), \\
                 &|\bm{m}(\bm{x},0)|=1.
            \end{aligned}
        \end{equation}
        We can design a numerical scheme that preserves both the unit length constraint and a modified energy dissipation by using SAV \cite{shen2018scalar,shen2019new}:
        \begin{equation}
		\label{PRKsav}
		\begin{aligned}
			&\bm{u}_0  =\bm{m}^n,\;r_0=r^n \\         
            &\bm{u}_i  = \bm{u}_0 +\tau \sum_{j=1}^i a_{ij}\bm{\mu_j},\;r_i = r_0+\frac{\tau}{2}\sum_{j=1}^i a_{ij}\left(\frac{F'(\bm{u}_{j-1})}{\sqrt{E_1[\bm{u}_{j-1}]}},\bm{\mu_j}\right) ,\quad i=1,\,\dots,\,s,\\
            &\bm{\mu}_i = P(\bm{u}_{i-1})\left[\sum_{k=1}^i d_{ik}\Delta \bm{u}_k-\frac{F'(\bm{u}_{i-1})}{\sqrt{E_1[\bm{u}_{i-1}]}}\sum_{k=1}^i d_{ik}r_k\right],\quad i=1,\,\dots,\,s,\\
			&\bm{m}^{n+1} = \frac{\tilde{\bm{m}}^{n+1}}{\left|\tilde{\bm{m}}^{n+1}\right|},\,\tilde{\bm{m}}^{n+1}  =\bm{u}_0 +\tau \sum_{j=1}^s b_{j} \bm{\mu}_j,\,r^{n+1} = r_0+\frac{\tau}{2}\sum_{j=1}^s b_j \left(\frac{F'(\bm{u}_{j-1})}{\sqrt{E_1[\bm{u}_{j-1}]}},\bm{\mu_j}\right).
		\end{aligned}
	\end{equation}
    The proof is analogous to Theorem \ref{mainthm}. Denote $\tilde{\bm{u}}_i = \sum_{j=1}^i d_{i j} \bm{u}_j$. We can derive $|\tilde{\bm{m}}^{n+1} |\geqslant| \bm{m}^n|$ by 
    \begin{align}\label{eq3.17}
		\tilde{\bm{m}}^{n+1}\cdot\tilde{\bm{m}}^{n+1}=
		\bm{u}_0 \cdot\bm{u}_0 +\tau^2 \sum_{i,j=1}^s r_{ij} \bm{\mu}_i\cdot \bm{\mu}_j,
	\end{align}
    and modified energy dissipation
     $\frac{1}{2}\| \nabla\tilde{\bm{m}}^{n+1}\|^2 +(r^{n+1})^2 \leqslant\|  \frac{1}{2}\nabla\bm{m}^n\|^2 +(r^{n})^2$ by deriving the following relations
     \begin{equation*}
	   \begin{aligned}
		\left(\nabla \tilde{\bm{m}}^{n+1},\nabla\tilde{\bm{m}}^{n+1}\right) =& \left(\nabla\bm{u}_0 ,\nabla\bm{u}_0 \right) - 2\tau \sum_{i=1}^s b_i\left( \Delta \tilde{\bm{u}}_i,\bm{\mu}_i \right) -\tau^2 \sum_{i,j=1}^s q_{ij} \left(\bm{\mu}_i,(-\Delta) \bm{\mu}_j\right),
    \end{aligned}
	  \end{equation*}
    \begin{equation*}
	   \begin{aligned}
        \left(r^{n+1},r^{n+1}\right) =& \left(r_0 ,r_0 \right) + \tau \sum_{i=1}^s b_i\left(\frac{F'(\bm{u}_{i-1})}{\sqrt{E_1[\bm{u}_{i-1}]}}\sum_{k=1}^i d_{ik}r_k,\bm{\mu}_i \right) \\
		&-\frac{\tau^2}{4} \sum_{i,j=1}^s q_{ij} \left(\frac{F'(\bm{u}_{i-1})}{\sqrt{E_1[\bm{u}_{i-1}]}},\bm{\mu_i}\right)\left(\frac{F'(\bm{u}_{j-1})}{\sqrt{E_1[\bm{u}_{j-1}]}},\bm{\mu_j}\right),
	   \end{aligned}
	  \end{equation*}
    provided the same conditions in Theorem \ref{mainthm}.

    Furthermore, if the nonlinear energy is quadratic $F(\bm{m}) = \frac{1}{2}\bm{m}\cdot \tilde{\mathcal{L}}\bm{m} + \bm{v}\cdot \bm{m}$, satisfying
    \begin{equation}\label{quad en}
        E\left[\frac{\bm{m}}{|\bm{m}|}\right]\leqslant E[\bm{m}],\quad\text{for }|\bm{m}|\geqslant 1,
    \end{equation}
    we can design a numerical scheme that preserves both the unit length constraint and the original energy dissipation:
    \begin{equation}
        \begin{aligned}
            \label{PRKori}
            \begin{aligned}
			&\bm{u}_0  =\bm{m}^n\\         
            &\bm{u}_i  = \bm{u}_0 +\tau \sum_{j=1}^i a_{ij}\bm{\mu_j} ,\quad i=1,\,\dots,\,s,\\
            &\bm{\mu}_i = P(\bm{u}_{i-1})\left[\sum_{k=1}^i d_{ik}(\Delta \bm{u}_k-\gamma  \mathcal{L}\bm{u}_k)+\gamma \mathcal{L} \bm{u}_{i-1}-F'(\bm{u}_{i-1})\right],\quad i=1,\,\dots,\,s,\\
			&\bm{m}^{n+1} = \frac{\tilde{\bm{m}}^{n+1}}{\left|\tilde{\bm{m}}^{n+1}\right|},\;\tilde{\bm{m}}^{n+1}  =\bm{u}_0 +\tau \sum_{j=1}^s b_{j} \bm{\mu}_j,
		\end{aligned}
        \end{aligned}
    \end{equation}
    where $\gamma$ is a stabilization constant and $\mathcal{L}\succeq \tilde{\mathcal{L}}$ is a suitable linear operator.
    For example, the Landau-Lifshitz-Gilbert equation of a uniaxial material \cite{xie2020} satisfies the energy relation \eqref{quad en}.
    Actually, we can derive \eqref{eq3.17} and the following relations by a similar argument as in Theorem \ref{mainthm}
    \begin{equation*}
        \begin{aligned}
            \left( \tilde{\bm{m}}^{n+1},\mathcal{L}_1\tilde{\bm{m}}^{n+1}\right) =& \left(\bm{u}_0 ,\mathcal{L}_1\bm{u}_0 \right) + 2\tau \sum_{i=1}^s b_i\left(  \mathcal{L}_1 \tilde{\bm{u}}_i,\bm{\mu}_i \right)-\tau^2 \sum_{i,j=1}^s q_{ij} \left(\bm{\mu}_i,\mathcal{L}_1\bm{\mu}_j\right),\\
        \left( \tilde{\bm{m}}^{n+1},\mathcal{L}_2\tilde{\bm{m}}^{n+1}\right) =& \left(\bm{u}_0 ,\mathcal{L}_2\bm{u}_0 \right) + 2\tau \sum_{i=1}^s b_i\left( \mathcal{L}_2 \bm{u}_{i-1},\bm{\mu}_i \right) +\tau^2 \sum_{i,j=1}^s r_{ij} \left(\bm{\mu}_i,\mathcal{L}_2\bm{\mu}_j\right),\\
        \left( \bm{v},\tilde{\bm{m}}^{n+1}\right) =& \left(\bm{v},\bm{u}_0 \right) + \tau \sum_{i=1}^s b_i\left( \bm{v},\bm{\mu}_i \right),
        \end{aligned}
    \end{equation*}
    where $\mathcal{L}_1$ and $\mathcal{L}_2$ are linear operators, and denote $\tilde{\bm{u}}_i = \sum_{j=1}^i d_{i j} \bm{u}_j$. Substituting $\mathcal{L}_1=\frac{1}{2}(-\Delta +\gamma\mathcal{L})$ and $\mathcal{L}_2=\frac{1}{2}(-\gamma\mathcal{L} + \tilde{\mathcal{L}})$ and summing the above relations, we have the original energy dissipation, provided the same conditions in Theorem \ref{mainthm} and a suitable stabilization constant such that $\gamma(Q+R)\succeq R$.

    For the above product-type IMEX-RK schemes, the conditions for the preservation of both the unit length constraint and energy dissipation are the same as in Theorem \ref{mainthm}. Hence, we can use the Butcher tableau \eqref{PRK2table} to achieve a second-order scheme.
    \end{remark}
           
\section{Numerical experiments}\label{sec4}
	\label{numerical_exp}
	In this section, we present some numerical examples for the Landau-Lifshitz-Gilbert equation and the one-constant Oseen-Frank model. 
    Non-homogeneous Dirichlet or homogeneous Neumann boundary conditions are imposed as appropriate for each problem.

    \subsection{Implementation of PRK2 scheme}\label{sec4.1}
    In numerical experiments, we use the central finite difference method on a uniform mesh for spatial discretizations. Without loss of generality, we only present the two-dimensional case with the homogeneous Neumann boundary conditions. The other cases are similar. Let $\Omega = (0,L)^2$ and $h = L/K$. Denote $\bm{M}^n_{ij} = ((M_1^n)_{ij},(M_2^n)_{ij},(M_3^n)_{ij})^\top$ as the numerical solution at $\bm{x} = \bm{x}_{ij} = (ih,jh)^\top$ and $t=t_n=n\tau$.  We write $\bm{M}^n$ as a vector $\bm{M}^n = (\bm{M}^n_1,\bm{M}^n_2,\bm{M}^n_3)^\top$, where
    \begin{equation*}
    \begin{aligned}
        \bm{M}^n_1 = &((M_1^n)_{00},\dots,(M_1^n)_{K0};\dots;(M_1^n)_{K0},\dots,(M_1^n)_{KK})^\top;\\
        \bm{M}^n_2=&((M_2^n)_{00},\dots,(M_2^n)_{K0};\dots;(M_2^n)_{K0},\dots,(M_2^n)_{KK})^\top;\\
        \bm{M}^n_3=&((M_3^n)_{00},\dots,(M_3^n)_{K0};\dots;(M_3^n)_{K0},\dots,(M_3^n)_{KK})^\top.
    \end{aligned}
    \end{equation*}
    The discrete Laplacian on 3-dimensional vector field is defined as $D_h^{(3)}$, where  
    \begin{equation*}
        D_h^{(3)} = \begin{pmatrix}
            D_h&&\\
            &D_h&\\
            &&D_h
        \end{pmatrix},\quad
        G_h = \begin{pmatrix}
            -2&2&&&\\
            1&-2&1&&\\
            &\ddots&\ddots&\ddots&\\
            &&1&-2&1\\
            &&&2&-2
        \end{pmatrix},
    \end{equation*}
    and $D_h = I \otimes G_h + G_h \otimes I\in \mathbb{R}^{K^2\times K^2}$.
    We define the projection operator
    $$P(\bm{M}^n) = \alpha P_1(\widehat{\bm{M}}^{n})+\beta  \begin{pmatrix}
            \bm{0} &-\text{diag}(\bm{M}^n_3)&\text{diag}(\bm{M}^n_2)\\
            \text{diag}(\bm{M}^n_3)&\bm{0}&-\text{diag}(\bm{M}^n_1)\\
            -\text{diag}(\bm{M}^n_2)&\text{diag}(\bm{M}^n_1)&\bm{0}
        \end{pmatrix},$$
        where $\widehat{\bm{M}}^{n}_{ij} = \frac{\bm{M}^{n}_{ij}}{\sqrt{(M^{n}_1)_{ij}^2+(M^{n}_2)_{ij}^2+(M^{n}_3)_{ij}^2}}$ and
    \begin{equation*}
        P_1(\widehat{\bm{M}}^{n}) = I-\begin{pmatrix}
            \text{diag}(\widehat{\bm{M}}^n_1\circ \widehat{\bm{M}}^n_1)&\text{diag}(\widehat{\bm{M}}^n_1\circ \widehat{\bm{M}}^n_2)&\text{diag}(\widehat{\bm{M}}^n_1\circ \widehat{\bm{M}}^n_3)\\
            \text{diag}(\widehat{\bm{M}}^n_2\circ \widehat{\bm{M}}^n_1)&\text{diag}(\widehat{\bm{M}}^n_2\circ \widehat{\bm{M}}^n_2)&\text{diag}(\widehat{\bm{M}}^n_2\circ \widehat{\bm{M}}^n_3)\\
            \text{diag}(\widehat{\bm{M}}^n_3\circ \widehat{\bm{M}}^n_1)&\text{diag}(\widehat{\bm{M}}^n_3\circ \widehat{\bm{M}}^n_2)&\text{diag}(\widehat{\bm{M}}^n_3\circ \widehat{\bm{M}}^n_3)
        \end{pmatrix}.
    \end{equation*}
    The discrete energy is defined by 
    \begin{equation}\label{eq4.1a}
        E_h(\bm{M}) = (\bm{M},-D_h^{(3)}\bm{M})_h,
    \end{equation} 
    where the discrete inner product $(\cdot,\cdot)_h$ is computed by the trapezoidal rule. Therefore, we have
        $$(\bm{M},-D_h^{(3)}\bm{M})_h=\sum_{l=1}^3\sum_{j,k=0}^{K-1}\left[((M_l)_{jk}-(M_l)_{j(k-1)})^2+((M_l)_{jk}-(M_l)_{(j-1)k})^2\right].$$
    Then we can write the fully discrete PRK2 scheme as Algorithm \ref{alg:PRK}.
    
\begin{algorithm}[ht]
\caption{Fully discrete PRK2 Scheme}\label{alg:PRK}
\begin{algorithmic}[1]
\State \textbf{Given:} $\bm{M}^n$, time step $\tau$, coefficients $a_{ij}$, $b_j$, $d_{jk}$ for $i,j,k = 1,\dots,s$
\State \textbf{Initialize:} $\bm{U}^0 = \bm{M}^n$ 
\For{$i = 1$ to $2$}
    \State Solve the linear system for $\bm{U}^i$:
    \begin{align*}
        &\bm{\mu}_i = \left\{\begin{aligned}
        &\bm{0},&\text{if }i=1\\
            &\sum_{j=1}^{i-1} a_{ij} P(\bm{U}^{j-1}) \sum_{k=1}^{j} d_{jk} D_h^{(3)} \bm{U}^k
        + a_{ii} P(\bm{U}^{i-1}) \sum_{k=1}^{i-1} d_{ik} D_h^{(3)} \bm{U}^k,\;&\text{if }i>1
        \end{aligned}\right.   \\
        &(I - \tau a_{ii} d_{ii}  P(\bm{U}^{i-1}) D_h^{(3)}) \bm{U}^i 
        = \bm{U}^0 + \tau \bm{\mu}_i
    \end{align*}
    \Comment{Requires solving a linear system with sparse operator $I - \tau \, a_{ii} d_{ii} P(\bm{U}^{i-1}) D_h^{(3)}$}
\EndFor
\State Compute intermediate solution:
\[
\widetilde{\bm{M}}^{n+1} = \bm{U}^0 + \tau \sum_{j=1}^{2} b_{j} P(\bm{U}^{j-1}) \sum_{k=1}^{j} d_{jk} D_h^{(3)} \bm{U}^k
\]
\State Normalize to obtain the final solution:
\[
\bm{M}^{n+1}_{ij} = \frac{\widetilde{\bm{M}}^{n+1}_{ij}}{\sqrt{(\widetilde{M}^{n+1}_1)_{ij}^2+(\widetilde{M}^{n+1}_2)_{ij}^2+(\widetilde{M}^{n+1}_3)_{ij}^2}}.
\]
\Return $\bm{M}^{n+1}$
\end{algorithmic}
\end{algorithm}

Each step of PRK2 scheme consists of two stages, and each stage requires solving a linear system. In total, two linear systems need to be solved in our PRK2 scheme. It is worth noting that the matrices $(I - \tau \, a_{ii} \, d_{ii} \, P(\bm{U}^{i-1})D^{(3)}_h)$ are sparse. Each of them contains approximately $O(45K^2)$ non-zero elements distributed across 15 diagonals. Therefore, each linear system in PRK2 scheme can be solved efficiently. The obtained linear systems are solved by the built-in MATLAB solver.

In numerical experiments, we also compare the numerical performance of a PRK2 scheme in the form \eqref{intrPRK2} without provable energy dissipation
    \begin{equation}\label{other-PRK2}
	\begin{aligned}
		&\bm{u}_0  =\bm{m}^n, \\
		&\bm{u}_i  = \bm{u}_0+\tau\sum_{j=1}^i \hat{a}_{i j}\Big( \sum_{k=1}^j g_{jk}P(\bm{u}_{k-1}) \Big) \Delta \bm{u}_j, \quad \text{for }i=1,\,\dots,\, s, \\
		&\bm{m}^{n+1} = \frac{\tilde{\bm{m}}^{n+1}}{\left|\tilde{\bm{m}}^{n+1}\right|},\;\tilde{\bm{m}}^{n+1}  =\bm{u}_0 +\tau\sum_{j=1}^s \hat{b}_{ j}\Big( \sum_{k=1}^j g_{jk}P(\bm{u}_{k-1}) \Big) \Delta \bm{u}_j
	\end{aligned}
\end{equation}
with the Butcher tableau
\begin{equation}\renewcommand\arraystretch{1.2}\label{other-PRK2_table}
    \begin{aligned}
		\widehat{A}=(\hat{a}_{ij}):\,\begin{array}{c|c}
			\bm{c} &AD\\ \hline
			&\bm{b}^\top D
		\end{array},\quad
		G=(g_{ij}):\,\begin{array}{c|c}
			\bm{1} &D^{-1}\\ \hline
			&
		\end{array},
	\end{aligned}
\end{equation}
where the PRK coefficient matrices $A$ and $D$ are defined by \eqref{PRK2table}. Therefore, the absolute stability region of PRK scheme \eqref{other-PRK2} with Butcher tableau \eqref{other-PRK2_table} is the same as PRK sheme \eqref{PRK} with Butcher tableau \eqref{PRK2table}. We plot the absolute stability region $\mathcal{S}_{\frac{\alpha}{2}}$ in Fig.~\ref{fig:stabilityregion}. 
To make a distinction, we denote PRK scheme \eqref{PRK} as PRK2 and PRK scheme \eqref{other-PRK2} as scheme \eqref{other-PRK2}.

	\begin{figure}[!htbp]
	\centering	
    {\includegraphics[trim=0.3cm 0cm 0.5cm 0cm, clip,width=3in]{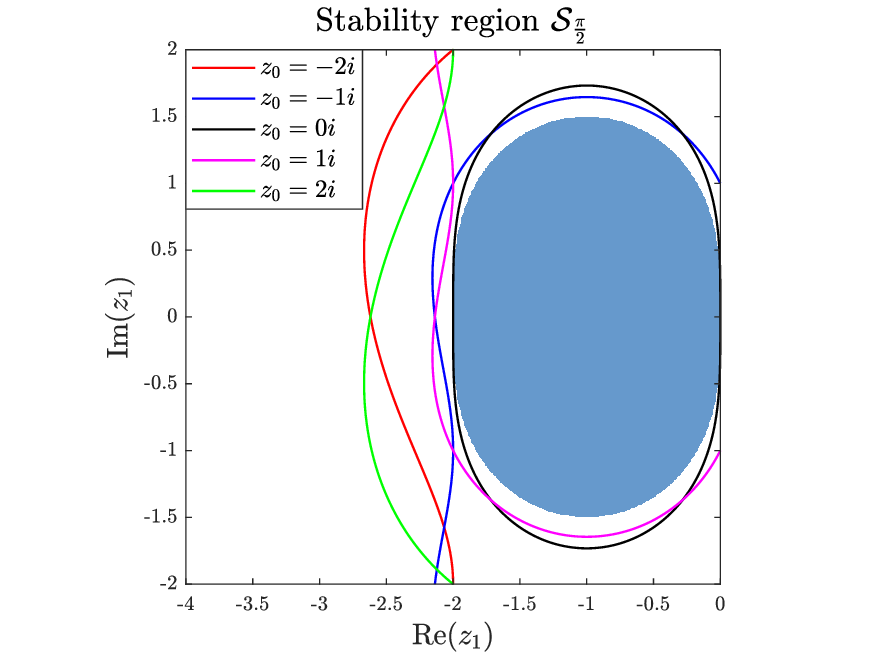}}
	\caption{Absolute stability region $\mathcal{S}_{\frac{\pi}{2}}$ of the Butcher tableau \eqref{PRK2table} for PRK sheme \eqref{PRK} and the Butcher tableau \eqref{other-PRK2_table} for PRK scheme \eqref{other-PRK2}.\label{fig:stabilityregion} }	
	\end{figure}

\subsection{Convergence Rate Test}
\begin{example}\label{ex_1}
{\rm
To investigate the convergence rates, we consider the gradient flow \eqref{Gradient Flow} in the computational domain $\Omega = (-\frac{1}{2}, \frac{1}{2})^2$ with parameters $\alpha = \beta = 1$. The initial condition is prescribed as follows:
\begin{equation*}
\begin{aligned}
    m_1(x, y, 0) &= 0.3 \sin(\pi x) \sin(\pi y), \\
    m_2(x, y, 0) &= 0.3 \sin(3\pi x) \sin(\pi y), \\
    m_3(x, y, 0) &= 1.0 + 0.2 \cos(2\pi x) \cos(2\pi y).
\end{aligned}
\end{equation*}
}
\end{example}
For spatial discretization, a uniform mesh with $h = 1/64$ is employed. Since the exact solution is unknown, a reference solution is generated using a fourth-order BDF method with a sufficiently small time step $\tau = 10^{-6}$. 

Table~\ref{tab:convergence} reports the $L^2$ errors and corresponding
convergence rates for the SIP1 scheme \cite{du2024}, the LM2 scheme
\cite{cheng2023length}, the proposed PRK2 scheme \eqref{PRK}, and the
scheme \eqref{other-PRK2}. The results confirm that SIP1
achieves first-order accuracy, while LM2, PRK2, and scheme
\eqref{other-PRK2} all exhibit consistent second-order convergence.

Despite sharing the same formal order, a clear difference in accuracy
levels is observed: both PRK-based schemes yield $L^2$ errors that are
approximately one order of magnitude smaller than those of LM2 at
identical time steps. Moreover, although a rigorous proof of energy
dissipation for scheme \eqref{other-PRK2} is left for future work, its
numerical accuracy is nearly indistinguishable from that of the proven
PRK2 scheme, highlighting the intrinsic accuracy and robustness of the
product-type IMEX Runge--Kutta framework.
\begin{table}[htbp]
\centering
\caption{Temporal convergence test: $L^2$ errors for
$\bm m=(m_1,m_2,m_3)^\top$ at $t=0.01024$.}
\label{tab:convergence}
\setlength{\tabcolsep}{5pt} 
\begin{tabularx}{\textwidth}{c
  r c
  r c
  r c
  r c}
\toprule
& \multicolumn{2}{c}{SIP1}
& \multicolumn{2}{c}{PRK2}
& \multicolumn{2}{c}{scheme \eqref{other-PRK2}}
& \multicolumn{2}{c}{LM2} \\
\cmidrule(lr){2-3}\cmidrule(lr){4-5}\cmidrule(lr){6-7}\cmidrule(lr){8-9}
$\tau$
& Error & Order
& Error & Order
& Error & Order
& Error & Order \\
\midrule
$3.2\mathrm{e}{-4}$
& $1.77\mathrm{e}{-3}$ & --
& $2.67\mathrm{e}{-5}$ & --
& $2.67\mathrm{e}{-5}$ & --
& $1.05\mathrm{e}{-3}$ & -- \\

$3.2\mathrm{e}{-4}/2$
& $8.94\mathrm{e}{-4}$ & 0.99
& $6.97\mathrm{e}{-6}$ & 1.95
& $6.93\mathrm{e}{-6}$ & 1.94
& $7.37\mathrm{e}{-5}$ & 3.83 \\

$3.2\mathrm{e}{-4}/2^{2}$
& $4.49\mathrm{e}{-4}$ & 0.99
& $1.78\mathrm{e}{-6}$ & 1.97
& $1.77\mathrm{e}{-6}$ & 1.97
& $1.87\mathrm{e}{-5}$ & 1.98 \\

$3.2\mathrm{e}{-4}/2^{3}$
& $2.25\mathrm{e}{-4}$ & 1.00
& $4.50\mathrm{e}{-7}$ & 1.98
& $4.47\mathrm{e}{-7}$ & 1.98
& $4.69\mathrm{e}{-6}$ & 1.99 \\

$3.2\mathrm{e}{-4}/2^{4}$
& $1.13\mathrm{e}{-4}$ & 1.00
& $1.14\mathrm{e}{-7}$ & 1.98
& $1.13\mathrm{e}{-7}$ & 1.98
& $1.18\mathrm{e}{-6}$ & 2.00 \\

$3.2\mathrm{e}{-4}/2^{5}$
& $5.64\mathrm{e}{-5}$ & 1.00
& $2.96\mathrm{e}{-8}$ & 1.94
& $2.95\mathrm{e}{-8}$ & 1.94
& $2.94\mathrm{e}{-7}$ & 2.00 \\
\bottomrule
\end{tabularx}
\end{table}

	\subsection{The Landau-Lifshitz-Gilbert equation}
    Consider the Landau-Lifshitz equation in the form \cite{bartels2006convergence, du2024}:
	\begin{equation}\label{LLG}
		\begin{aligned}
				&\bm{m}_t = \alpha\left(I - \bm{m}\bm{m}^\top /|\bm{m}^2|\right)\Delta\bm{m} + \beta \bm{m}\times\Delta\bm{m},\\
				&|\bm{m}|=1,
		\end{aligned}
	\end{equation}
	with homogeneous Neumann boundary conditions.

	\begin{example}\label{ex_2}{\rm
	We set $\Omega = [-\frac{1}{2}, \frac{1}{2}]^2$ and test the benchmark problem from \cite{an2021optimal} with the following smooth initial condition:
	$$
	\bm{m}(\bm{x},0)= \begin{cases}(0,0,-1)^\top  & \text { if }|\bm{x}| \geqslant 1 / 2, \\ \left(2 x_1 A,\, 2x_2A,\, A^2-|\bm{x}|^2\right)^\top  /\left(A^2+|\bm{x}|^2\right) & \text { if }|\bm{x}| < 1 / 2,\end{cases}
	$$
	where $A=(1-2|\bm{x}|)^4$.
    }
	\end{example}	

	We apply the PRK2 scheme \eqref{PRK} with parameters \( \alpha =  \beta = 1 \), \( h = 1/24 \), and \( \tau = 10^{-4} \) to compute numerical approximations of equation \eqref{LLG}. Figure~\ref{fig:LLG-top view} shows the numerical solution \( \bm{m} \) computed using our scheme on the plane \( \{(x, y, 0) : x, y \in \mathbb{R}\} \) at various time points. Rapid variation in \( \bm{m} \) is observed near the origin, suggesting a rapid growth of \( \nabla \bm{m} \), which may lead to divergence.
	
	At approximately \( t \approx 0.05 \), the spin \( \bm{m} \) at the origin transitions from \( (0, 0, 1)^\top  \) to \( (0, 0, -1)^\top  \), as shown in Figure~\ref{fig:LLG-front view}. This transition aligns with the blow up phenomenon described in \cite{bartels2008numerical}.
	
	
	\begin{figure}[!htbp]
		\centering	
		\subfloat[ Initial condition.]{\includegraphics[trim=1cm 0cm 1.5cm 0.5cm, clip,width=0.32\linewidth]{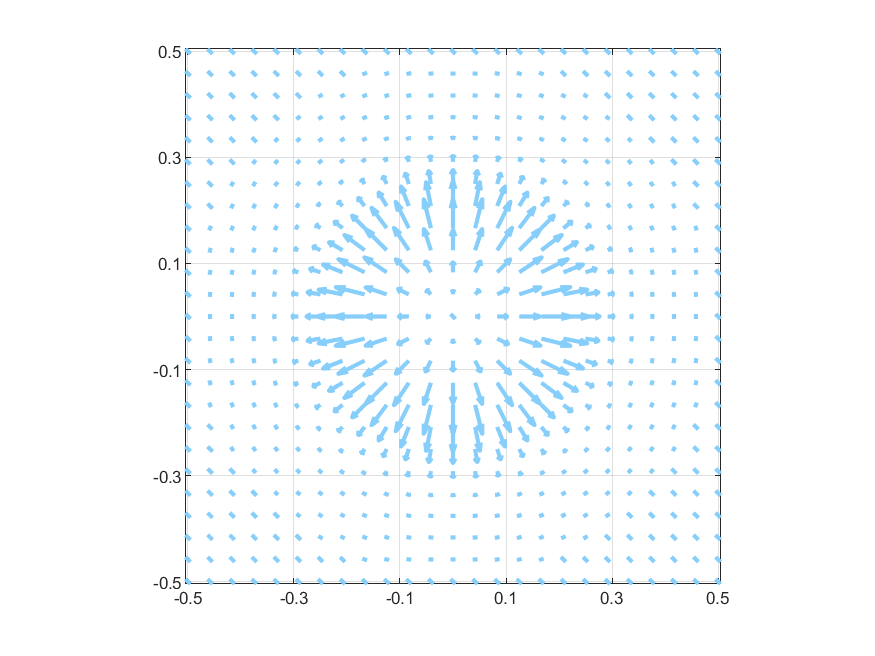}}
		\subfloat[$ t=0.02 $.]{\includegraphics[trim=1cm 0cm 1.5cm 0.5cm, clip,width=0.32\linewidth]{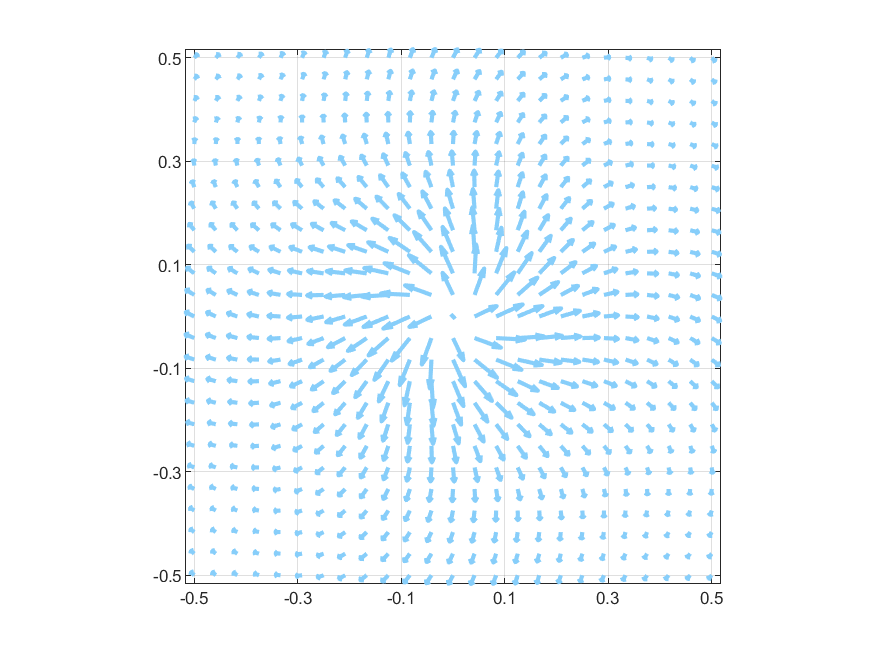}}
		\subfloat[$ t=0.04 $.]{\includegraphics[trim=1cm 0cm 1.5cm 0.5cm, clip,width=0.32\linewidth]{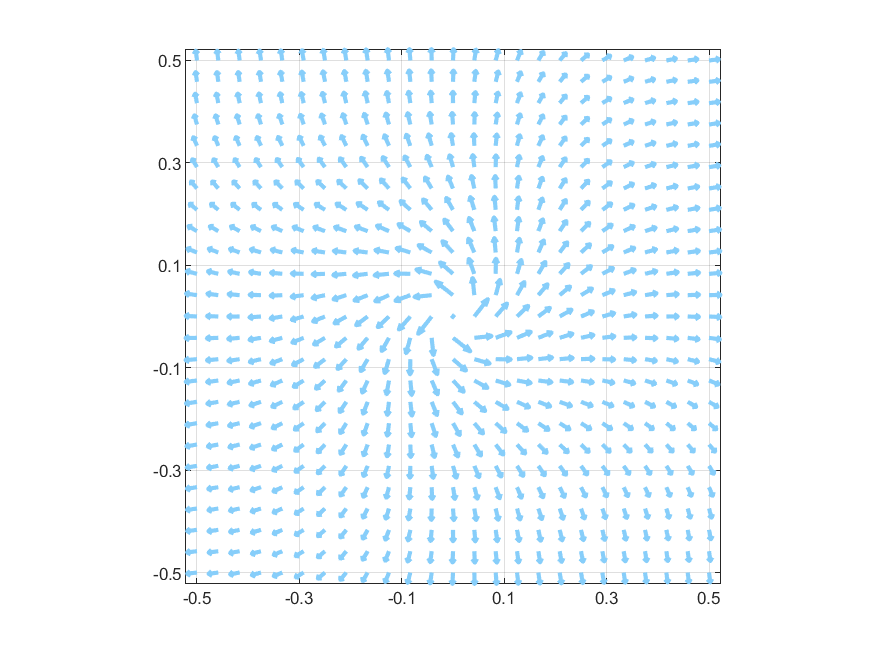}}\\
		\subfloat[$ t=0.048 $.]{\includegraphics[trim=1cm 0cm 1.5cm 0.5cm, clip,width=0.32\linewidth]{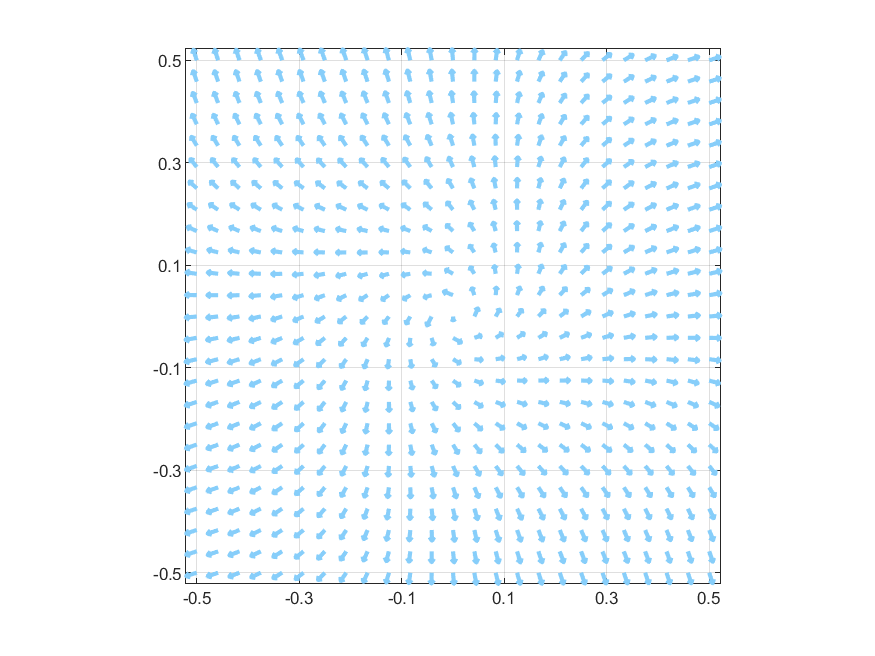}}
		\subfloat[$ t=0.049$.]{\includegraphics[trim=1cm 0cm 1.5cm 0.5cm, clip,width=0.32\linewidth]{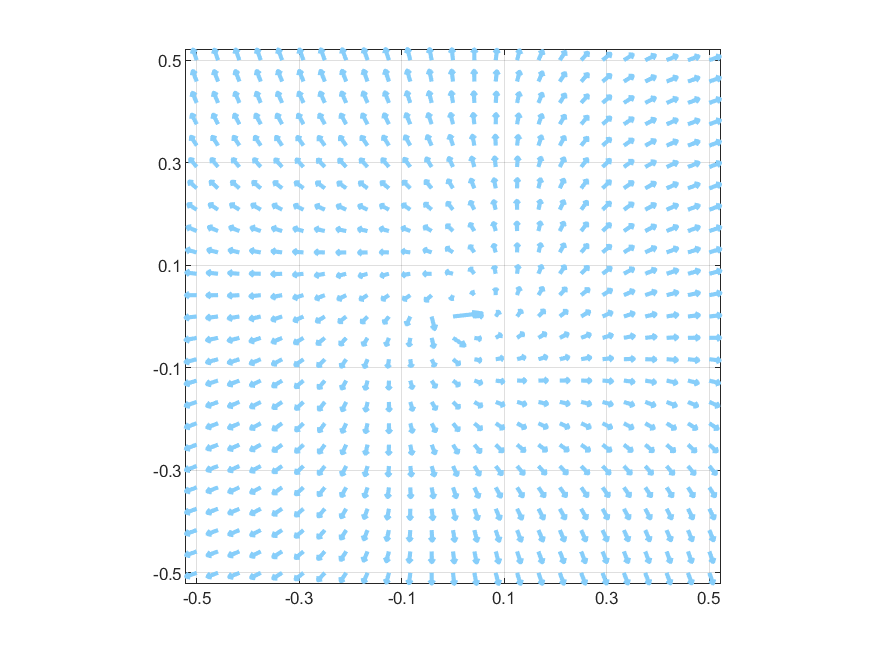}}
		\subfloat[$ t=0.1 $.]{\includegraphics[trim=1cm 0cm 1.5cm 0.5cm, clip,width=0.32\linewidth]{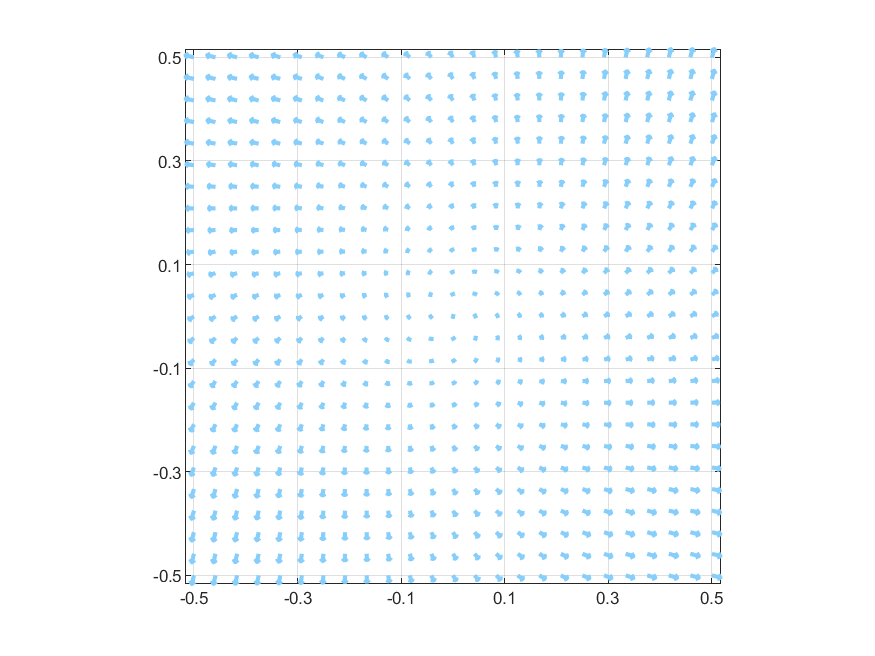}}
		\caption{ Example \ref{ex_2}: Numerical solutions $\bm{m}$ at $t = 0,\, 0.02,\, 0.04,\, 0.048,\, 0.049,\, 0.1$ using the PRK2 scheme with $\tau = 10^{-4}$.\label{fig:LLG-top view}}
	\end{figure}
	
	\begin{figure}[!htbp]
		\centering	
		\subfloat[ Initial condition.]{\includegraphics[trim=1cm 0cm 1.5cm 0.5cm, clip,width=0.32\linewidth]{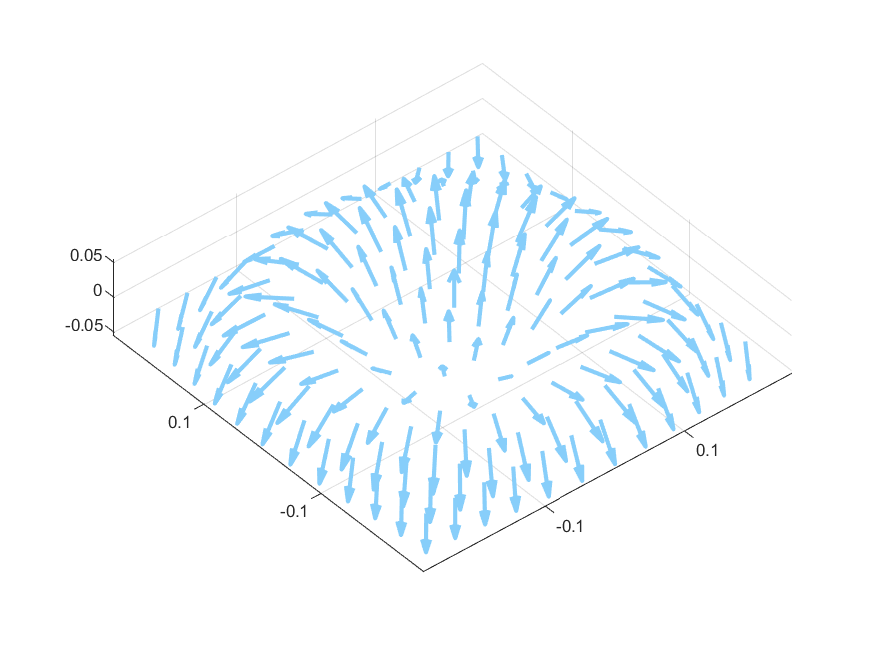}}
		\subfloat[$ t=0.049 $.]{\includegraphics[trim=1cm 0cm 1.5cm 0.5cm, clip,width=0.32\linewidth]{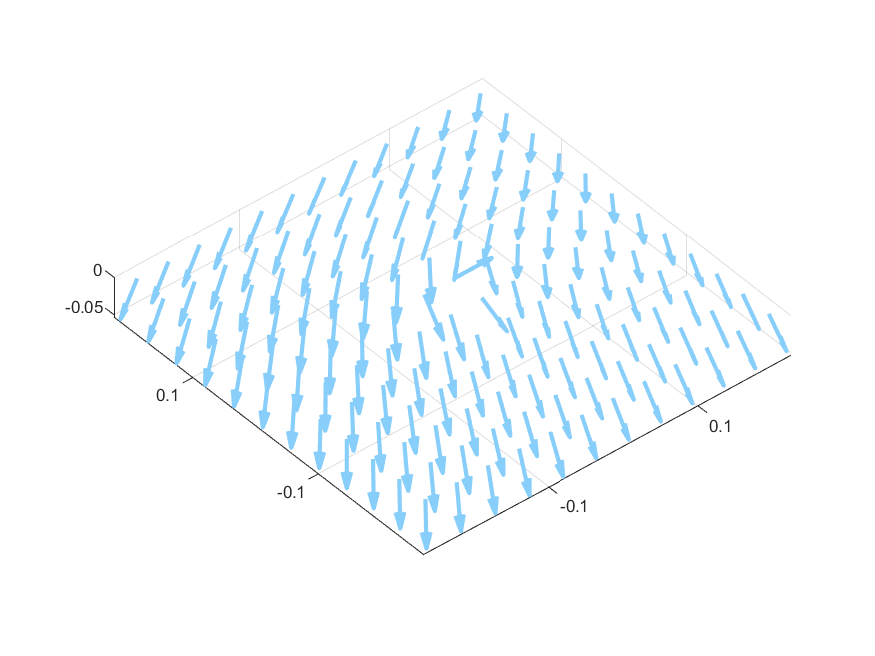}}
		\subfloat[$ t=0.1 $.]{\includegraphics[trim=1cm 0cm 1.5cm 0.5cm, clip,width=0.32\linewidth]{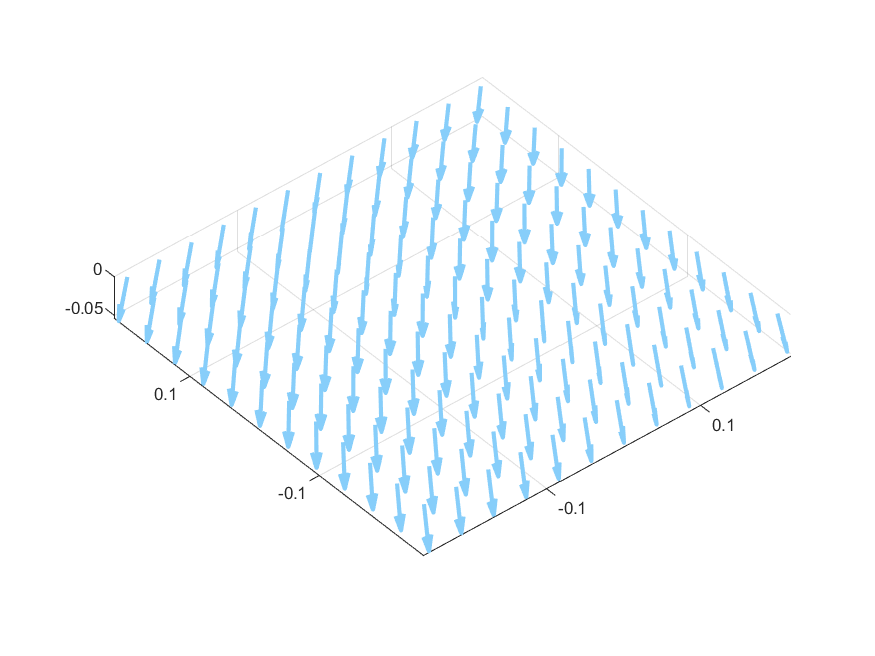}}
		\caption{ Example \ref{ex_2}: Numerical solutions $\bm{m}$ for nodals close to the origin at $t = 0,\, 0.049,\, 0.1$ using the PRK2 scheme with $\tau = 10^{-4}$. \label{fig:LLG-front view}}
	\end{figure}

To assess the accuracy and computational efficiency of the proposed PRK schemes, we perform comparative experiments with the SIP1 and LM2 schemes. All methods are applied to equation~\eqref{LLG} using identical parameters
$\alpha = \beta = 1$, spatial mesh size $h = 1/48$, and time step $\tau = 10^{-4}$.

Figure~\ref{fig:compare_all}(a) shows the discrete energy evolution produced by four schemes at the relatively large time step $\tau = 10^{-4}$. Although both SIP1 and LM2 remain stable at this time step, they fail to accurately predict the transition dynamics when compared with the reference solution obtained by a fourth-order BDF scheme with $\tau = 10^{-6}$. Specifically, the first-order SIP1 scheme exhibits a delayed energy drop, whereas the second-order LM2 method suffers from a premature energy drop. Both behaviors indicate an inaccurate resolution of the physical evolution time. In contrast, the energy curves produced by PRK2 and the
alternative formulation~\eqref{other-PRK2} are nearly indistinguishable
and accurately reproduce the transition time, closely following the reference energy evolution. These results indicate that the PRK framework
maintains higher accuracy at larger time steps while preserving the correct dissipation behavior. Although a rigorous proof of energy
dissipation for scheme~\eqref{other-PRK2} is left for future work, its
numerical performance is virtually identical to that of the proven PRK2 scheme, underscoring the intrinsic accuracy of product-type IMEX
Runge--Kutta methods.

To further evaluate computational efficiency, work--precision diagrams
are presented at three representative terminal times,
$T = 0.02$, $0.08$, and $0.2$, as shown in
Figure~\ref{fig:compare_all}(b)--(d). These plots illustrate the balance
between computational cost (CPU time) and achieved accuracy.

Compared with SIP1, PRK2 consistently demonstrates higher efficiency
across all tested time intervals. Despite its higher per-step cost due to
the two-stage structure, the ability to employ larger stable time steps
more than compensates for this overhead, resulting in improved
work--precision performance.

In comparison with LM2, the relative efficiency depends on the stage of
the dynamics. At early times ($T = 0.02$), LM2 is more efficient owing to
its constant-coefficient linear solvers. However, as the solution
approaches the transition regime (around $T = 0.08$, corresponding to the
energy drop in Figure~\ref{fig:compare_all}(a)), the efficiency of LM2
deteriorates significantly, reflecting difficulties in resolving the
transition timing. In contrast, PRK2 maintains its expected second-order
convergence and becomes markedly more efficient, an advantage that
persists into the later stage of the simulation ($T = 0.2$).

\begin{figure}[!htbp]
	\centering
	\subfloat[Energy evolution.]{
\includegraphics[width=0.43\linewidth]{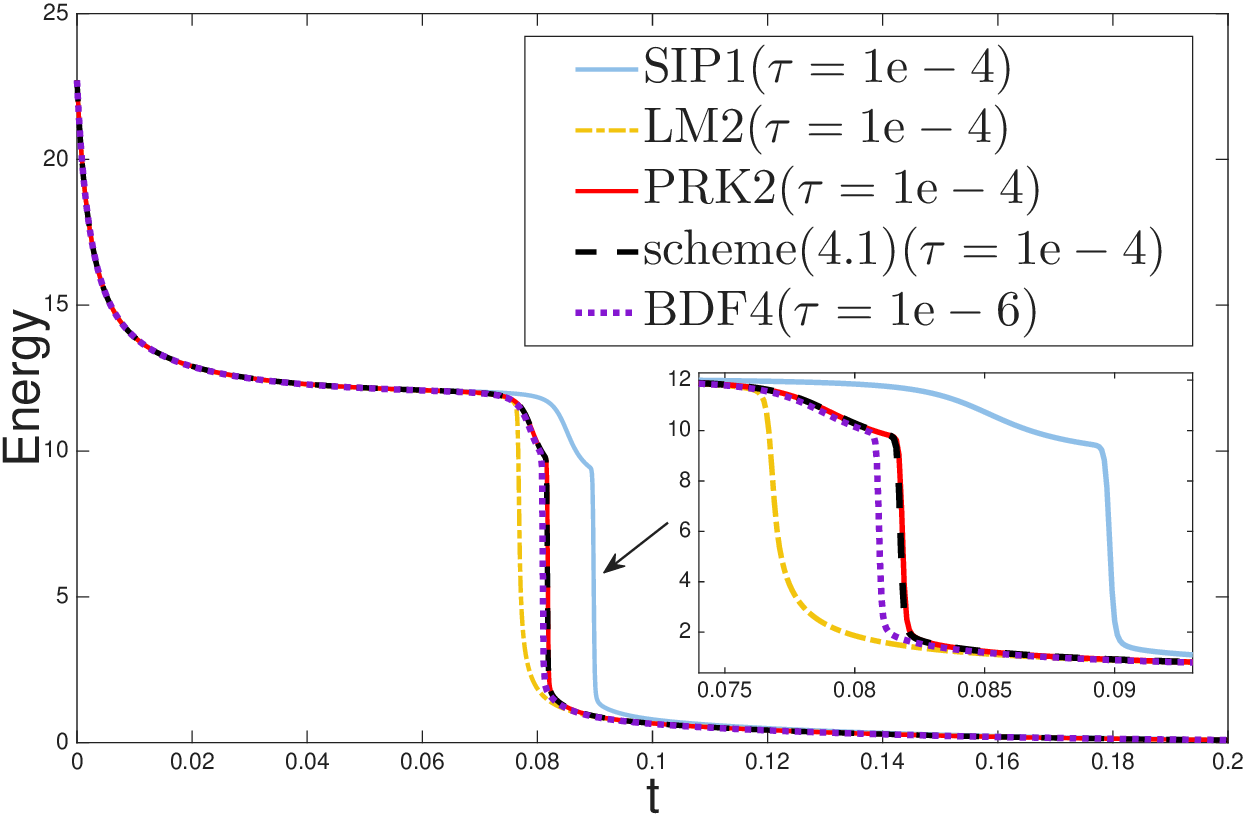}
	}
	\subfloat[$L^2$ error v.s. CPU times(s) at $T=0.02$.]{
		\includegraphics[width=0.43\linewidth]{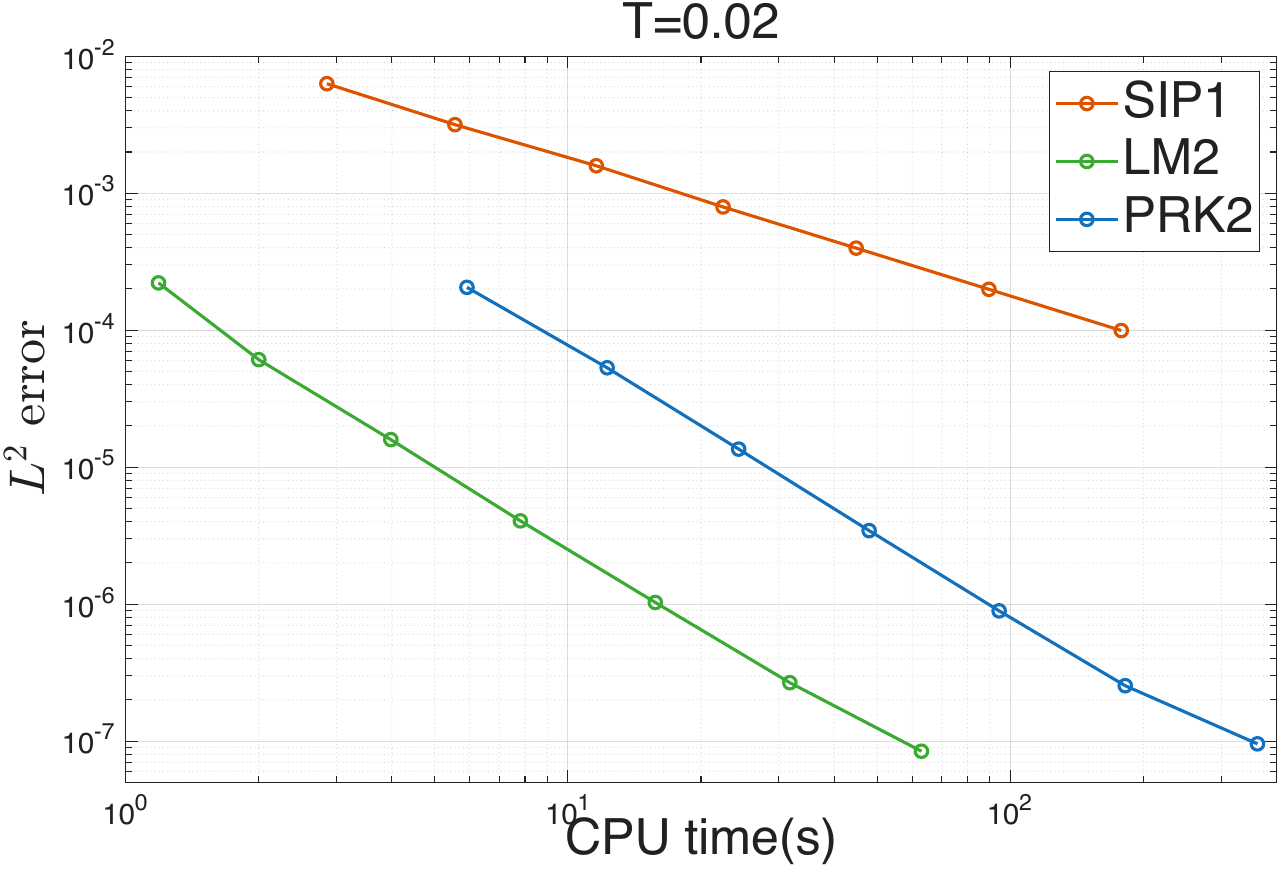}
	}\\
	\subfloat[$L^2$ error v.s. CPU times(s) at $T=0.08$.]{
		\includegraphics[width=0.43\linewidth]{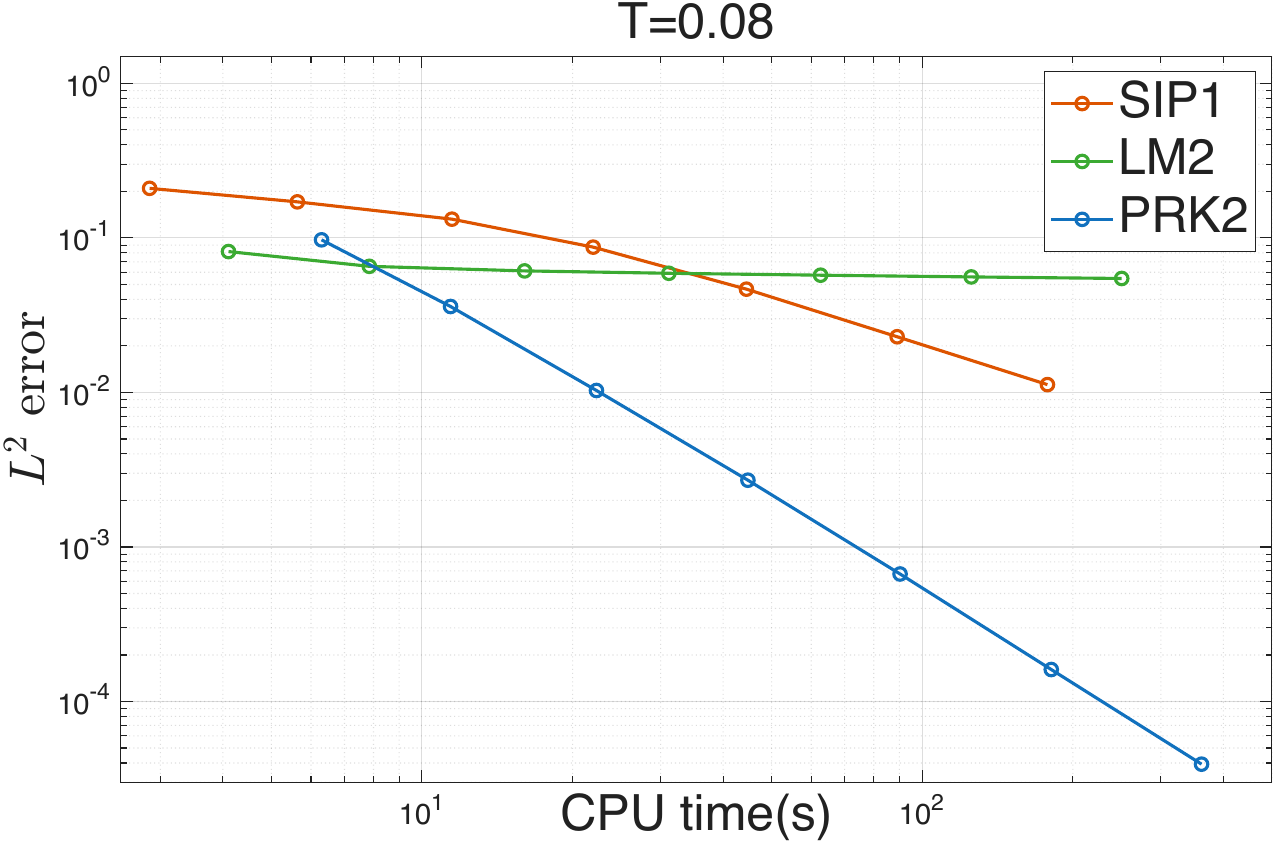}
	}
	\subfloat[$L^2$ error v.s. CPU times(s) at $T=0.2$.]{
		\includegraphics[width=0.43\linewidth]{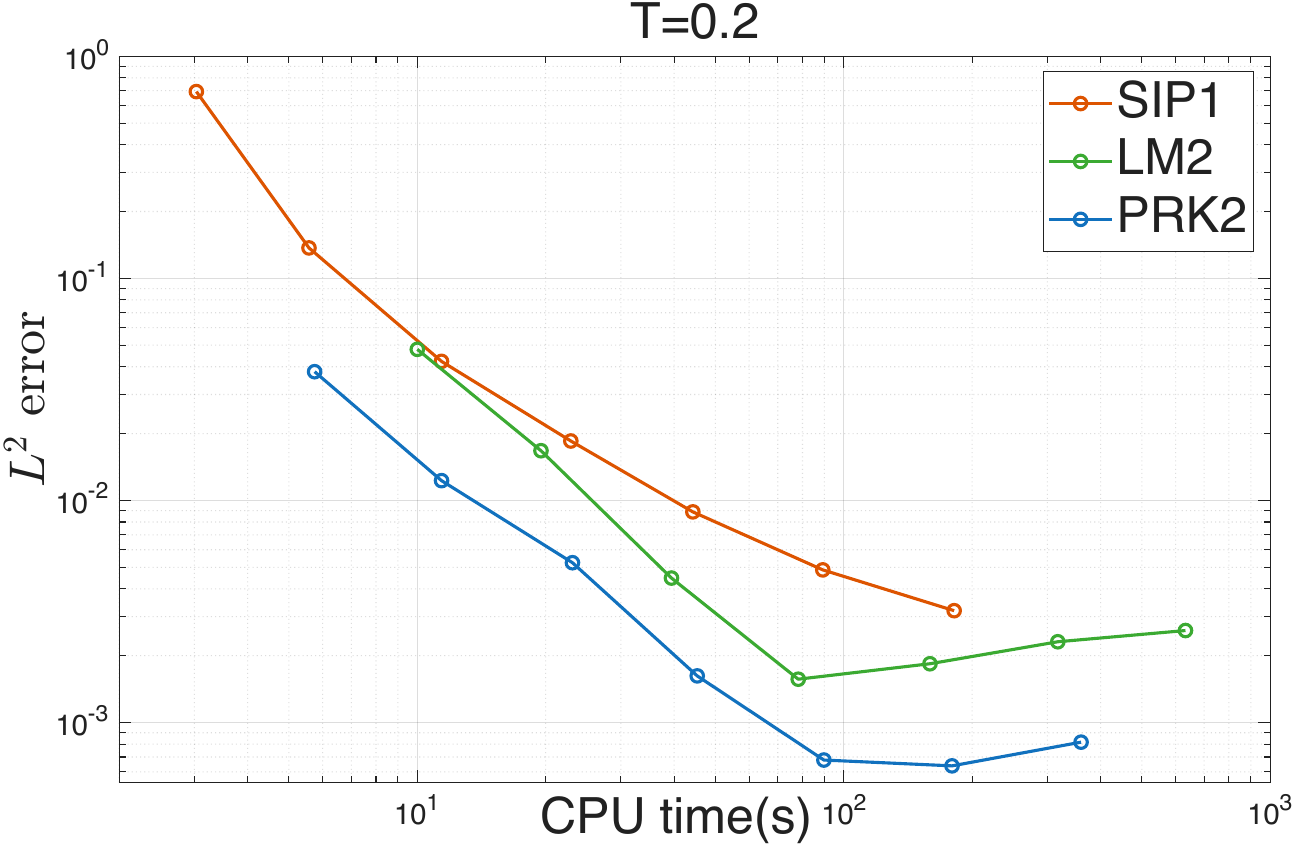}
	}
	\caption{Example~\ref{ex_2}: 
	Comparison of energy evolution and computational efficiency for SIP1,
	LM2, and the proposed PRK scheme. The work-precision diagrams are generated using a fixed spatial mesh size $h = 1/48$ and a sequence of time steps $\tau = 2\times 10^{-3} / 2^j$ for $j = 1, 2, \dots, 7$. 
    }
	\label{fig:compare_all}
\end{figure}

Table~\ref{tab:errors} reports the $L^2$ errors of LM2 and PRK2 for various
final times $T$ and time step sizes $\tau$. A pronounced difference in
robustness is observed. The LM2 scheme is highly sensitive to the time
step: for relatively large $\tau$ (e.g., $\tau = 10^{-3}$), the nonlinear
Lagrange multiplier problem may fail to admit a real solution, leading to
breakdown of the method (denoted by `NAN') at early times such as
$T = 0.004$. As the simulation progresses, LM2 becomes inapplicable even
for smaller time steps.

In contrast, both the PRK2 scheme and scheme~\eqref{other-PRK2} remain
robust across all tested time steps and final times, including long-time
simulations up to $T = 0.2$. This stable performance is consistent with
the linear structure of the PRK framework, which guarantees unique
solvability at each discrete time step and supports reliable numerical
behavior even for relatively large time steps.

\begin{table}[!htbp]
\centering
\caption{Robustness test: Errors at different final times $T$ and time steps $\tau$.}
\label{tab:errors}
\begin{tabular}{c cc cc cc}
\toprule
& \multicolumn{2}{c}{LM2} & \multicolumn{2}{c}{PRK2}& \multicolumn{2}{c}{scheme \eqref{other-PRK2}} \\
\cmidrule(lr){2-3} \cmidrule(lr){4-5} \cmidrule(lr){6-7}
$T\backslash\tau$
& $10^{-3}$  & $2\mathrm{e}{-4}$
& $10^{-3}$ & $2\mathrm{e}{-4}$
& $10^{-3}$ & $2\mathrm{e}{-4}$\\
\midrule
0.002
& $1.24\mathrm{e}{-1}$  & $1.29\mathrm{e}{-3}$
& $8.07\mathrm{e}{-3}$ & $5.53\mathrm{e}{-4}$ & $7.86\mathrm{e}{-3}$ & $5.42\mathrm{e}{-4}$ \\
0.004
& NAN & $6.22\mathrm{e}{-3}$
& $1.02\mathrm{e}{-2}$ & $6.43\mathrm{e}{-4}$ & $1.00\mathrm{e}{-2}$ & $6.36\mathrm{e}{-4}$\\
0.006
& -- & $9.84\mathrm{e}{-1}$
& $1.09\mathrm{e}{-2}$ & $6.64\mathrm{e}{-4}$ & $1.09\mathrm{e}{-2}$ & $6.63\mathrm{e}{-4}$\\
0.008
& -- & NAN
& $1.13\mathrm{e}{-2}$ & $6.72\mathrm{e}{-4}$ &$1.13\mathrm{e}{-2}$ & $6.75\mathrm{e}{-4}$\\
0.12
& -- & --
& $8.42\mathrm{e}{-2}$ & $6.32\mathrm{e}{-3}$ & $8.31\mathrm{e}{-2}$ & $6.29\mathrm{e}{-3}$\\
0.2
& -- & --
& $3.80\mathrm{e}{-2}$ & $4.02\mathrm{e}{-3}$ & $3.73\mathrm{e}{-2}$ & $4.01\mathrm{e}{-3}$\\
\bottomrule
\end{tabular}
\end{table}

	\subsection{The Oseen-Frank model} 
	We investigate the physically relevant one-constant
	approximation of the Oseen-Frank model \cite{bartels2005stability}, which reduces the problem to finding a harmonic minimizing map with values constrained on the unit sphere in \( \mathbb{R}^3 \). The governing equation is given by:
	\begin{equation}\label{OF}
		\begin{aligned}
			&\bm{m}_t = \left(I - \bm{m}\bm{m}^\top  /|\bm{m}^2|\right)\Delta\bm{m},\\
			&|\bm{m}|=1.
		\end{aligned}
	\end{equation}
	To demonstrate the robustness of our proposed scheme, we perform numerical experiments in the cubic domain \( \Omega = (0,1)^3 \).
	
\begin{example}\label{ex_3_1}{\rm
	We simulate the evolution of a point defect. Dirichlet boundary conditions are imposed on \( \partial\Omega \) as:  
	\[
	\bm{m} = \frac{(x,y,z) - (0.5, 0.5, 0.5)}{|(x,y,z) - (0.5, 0.5, 0.5)|}.
	\]  	
	The initial condition is specified as:  
	\begin{align*}
		m_1(x,y,z,0) &= \sin(2\pi x) + 2 + 0.5\sin(6\pi y) + 0.2\sin(4\pi z), \\
		m_2(x,y,z,0) &= \cos(2\pi x) + 2 + 0.5\cos(6\pi y) + 0.2\cos(4\pi z), \\
		m_3(x,y,z,0) &= \sin(2\pi x) + 6\cos(6\pi y) + \cos(4\pi z).
	\end{align*} } 	
\end{example}	

	We apply our scheme with \( h = 1/24 \) and \( \tau = 10^{-3} \) to compute numerical approximations of equation \eqref{OF}. Figure~\ref{fig:Example 2(3D)} illustrates the evolution of the director field \( \bm{m} \) toward the formation of a point defect at the center of the domain. For clarity, only selected slices of the solution are displayed in Figure~\ref{fig:Example 2(3D)}, while Figure~\ref{fig:Example 2(3D)-proj} shows the projection view of \( \bm{m} \) onto the plane \( \{(x,y,z) \in \Omega : z = 0.5\} \).  
	
	The results reveal that our scheme rotates the vectors, leading to the formation of a single degree-$1$ singularity within just a few iterations. As the simulation progresses, the point defect gradually migrates toward the center and eventually stabilizes in a steady state.
	
	\begin{figure}[!htbp]
		\centering	
		\subfloat[$ t=0 $.]{\includegraphics[trim=1.5cm 0.5cm 1.5cm 0.5cm, clip,width=0.43\linewidth]{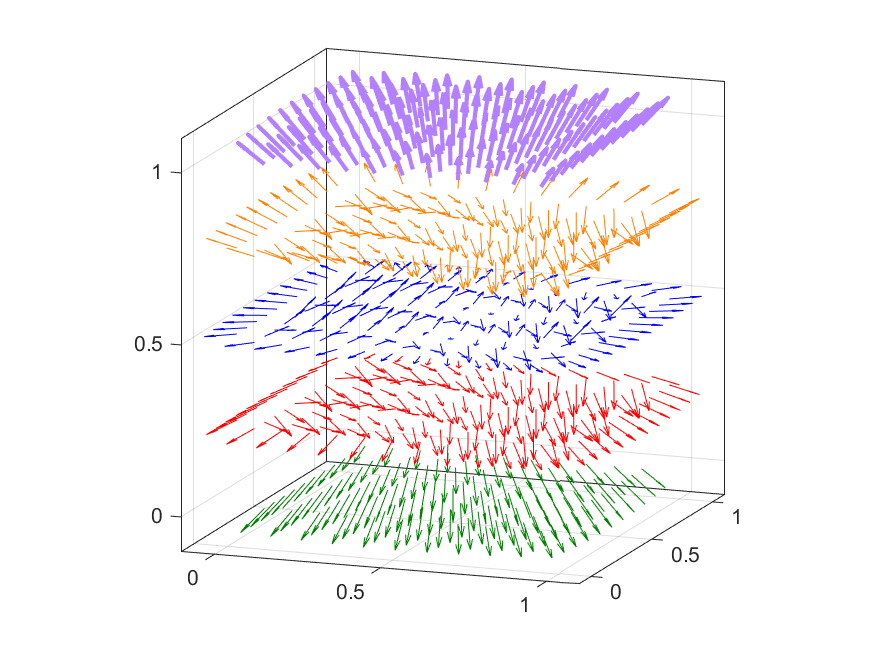}}
		\subfloat[$ t=0.015 $.]{\includegraphics[trim=1.5cm 0.5cm 1.5cm 0.5cm, clip,width=0.43\linewidth]{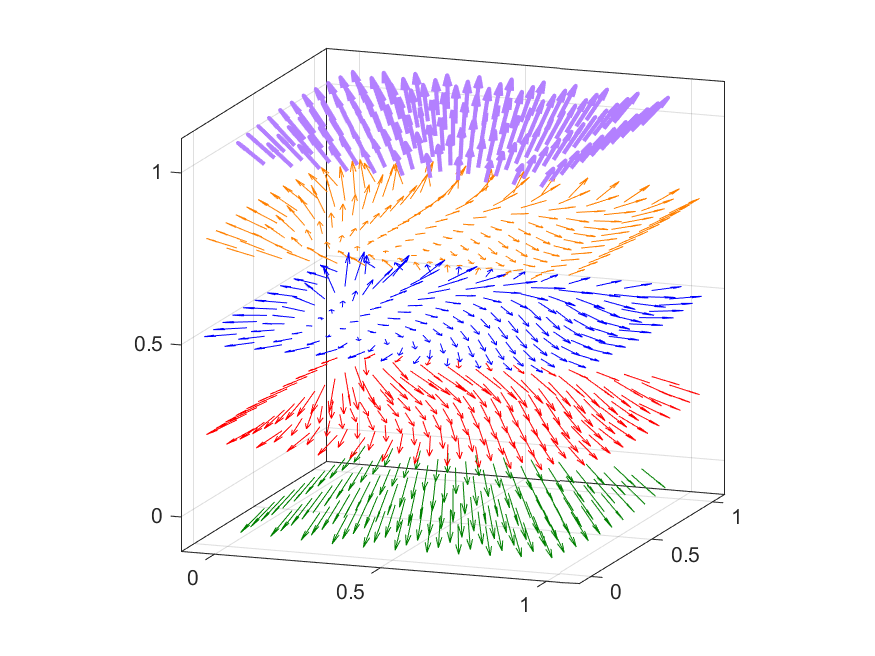}}\\
		\subfloat[$ t=0.05$.]{\includegraphics[trim=1.5cm 0.5cm 1.5cm 0.5cm, clip,width=0.43\linewidth]{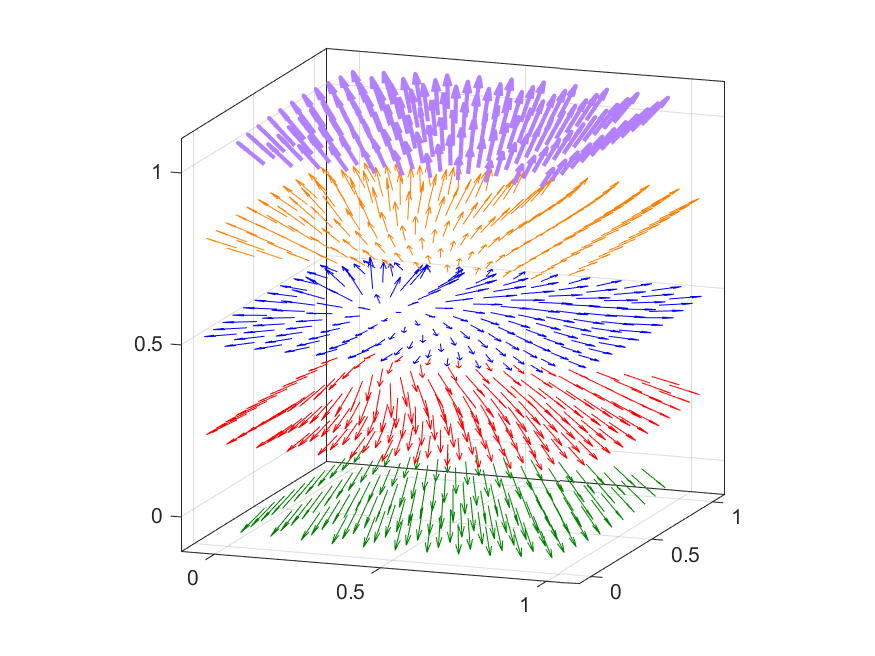}}
		\subfloat[$ t=0.6 $.]{\includegraphics[trim=1.5cm 0.5cm 1.5cm 0.5cm, clip,width=0.43\linewidth]{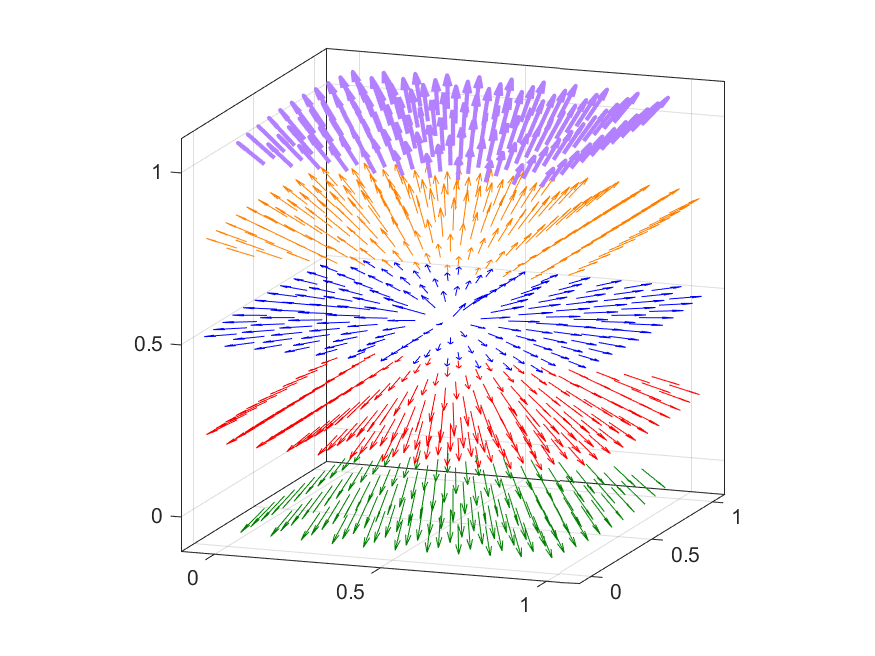}}
		\caption{ Example \ref{ex_3_1}: Numerical solutions $\bm{m}$ at $t = 0,\, 0.015,\, 0.05,\, 0.6$ computed using the PRK2 scheme with $\tau = 10^{-3}$. \label{fig:Example 2(3D)}}
	\end{figure}
	
	\begin{figure}[!htbp]
		\centering	
		\subfloat[$ t=0 $.]{\includegraphics[trim=1.5cm 0.5cm 1.5cm 0.5cm, clip,width=0.43\linewidth]{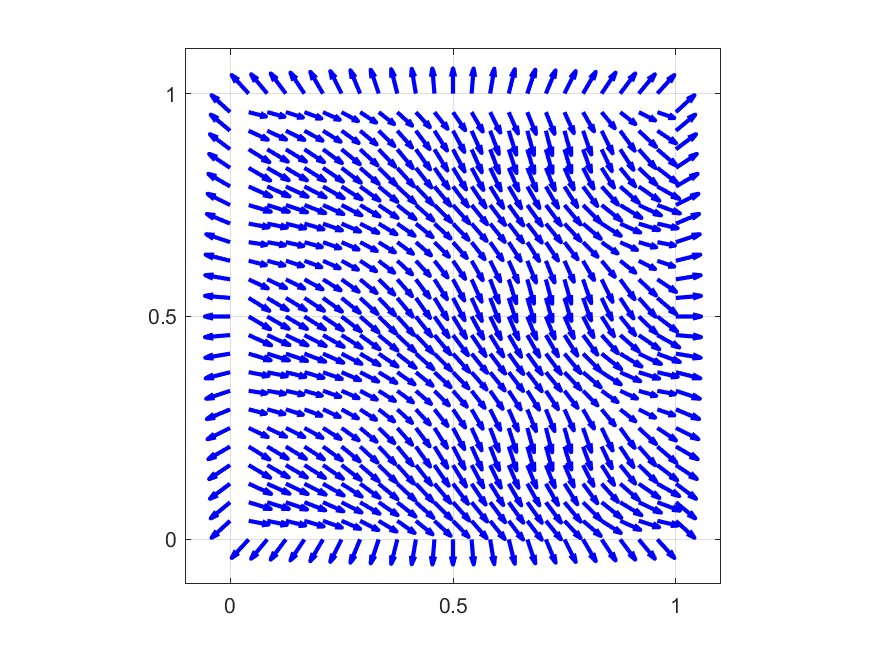}}
		\subfloat[$ t=0.015 $.]{\includegraphics[trim=1.5cm 0.5cm 1.5cm 0.5cm, clip,width=0.43\linewidth]{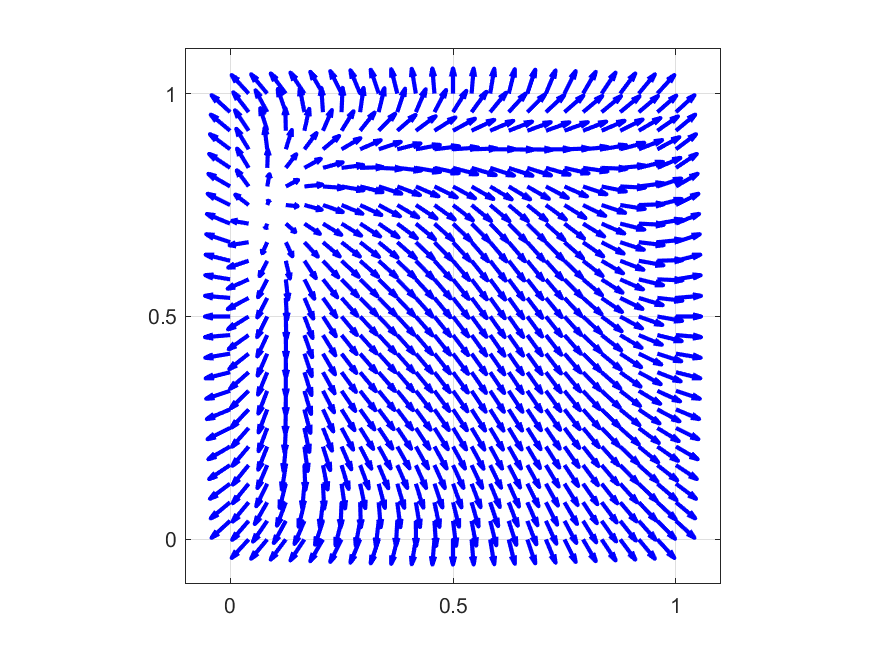}}\\
		\subfloat[$ t=0.05$.]{\includegraphics[trim=1.5cm 0.5cm 1.5cm 0.5cm, clip,width=0.43\linewidth]{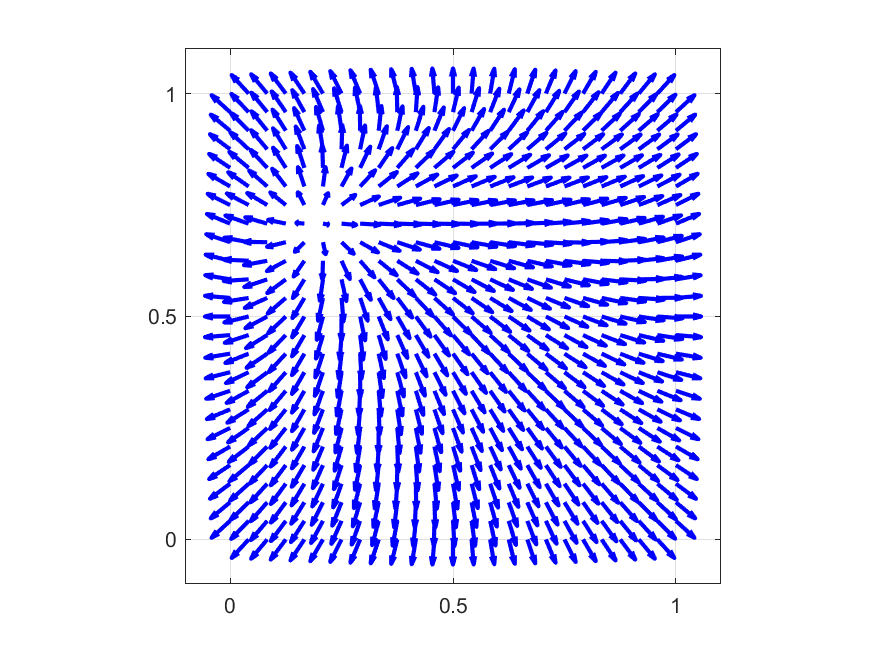}}
		\subfloat[$ t=0.6 $.]{\includegraphics[trim=1.5cm 0.5cm 1.5cm 0.5cm, clip,width=0.43\linewidth]{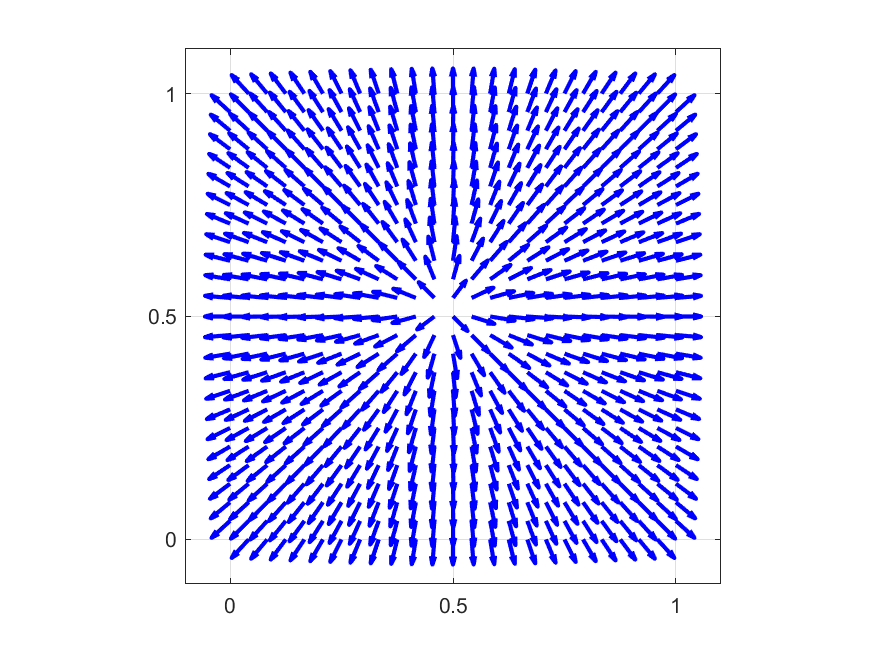}}
		\caption{ Example \ref{ex_3_1}: Numerical solutions $\bm{m}$ at $\{(x,y,z)\in\Omega:z=0.5\}$ for $t = 0,\, 0.015,\, 0.05,\, 0.6$ computed using the PRK2 scheme with $\tau = 10^{-3}$. \label{fig:Example 2(3D)-proj}}
	\end{figure}

\begin{example}\label{ex_3_2}{\rm
	We simulate a twisted nematic state of liquid crystals. Dirichlet boundary conditions are enforced on the planes $z=0$ and $z=1$, while homogeneous Neumann boundary conditions are imposed on the remaining boundaries. Specifically: on the lower slab $\{(x,y,z) \in \Omega : z = 0\}$, the nematic
	rods are aligned parallel to the $x$-axis with $\bm{m}=(1,0,0)^\top $. For the upper slab $\{(x,y,z) \in \Omega : z =0.5\}$, the rods are uniformly aligned along the $y$-axis with $\bm{m}=(0,1,0)^\top $.} 	
\end{example}	

	We initialize the computation with random unit vectors and apply our scheme with \( h = 1/24 \) and \( \tau = 5\times10^{-3} \). Figure~\ref{fig:Example 3.2-1} shows the evolution of the nematic configuration over time. The algorithm efficiently transitions the initially disordered configuration into a more stable twisted configuration. For clarity, Figure~\ref{fig:Example 3.2-2} provides a close-up view of the numerical solution along the	$z$-axis at $x=y=0.5$. The expected twisted equilibrium solution is clearly observed.
	
	\begin{figure}[!htbp]
		\centering	
		\subfloat[$ t=0 $.]{\includegraphics[trim=1.5cm 0.5cm 1.5cm 0.5cm, clip,width=0.43\linewidth]{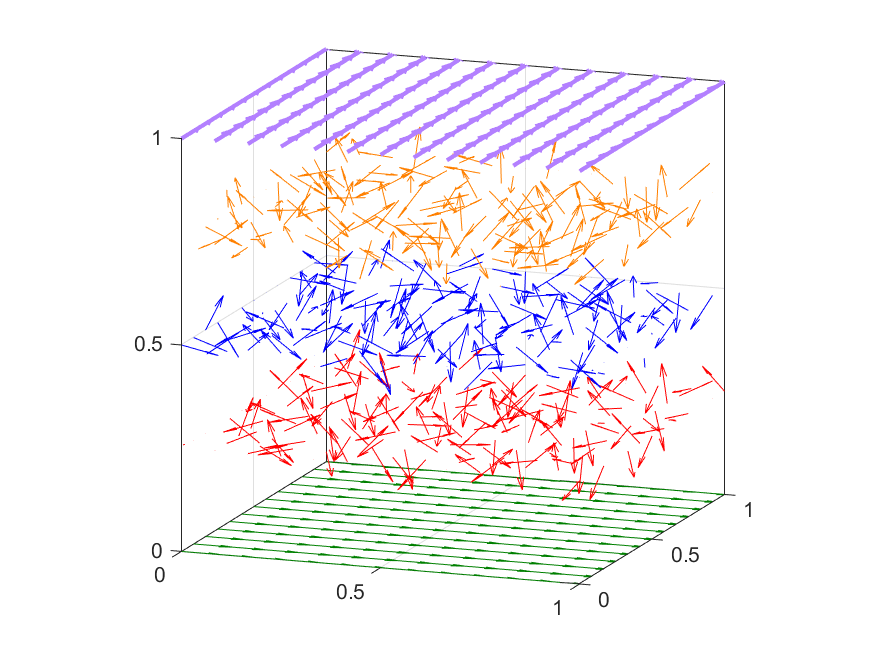}}
		\subfloat[$ t=0.05 $.]{\includegraphics[trim=1.5cm 0.5cm 1.5cm 0.5cm, clip,width=0.43\linewidth]{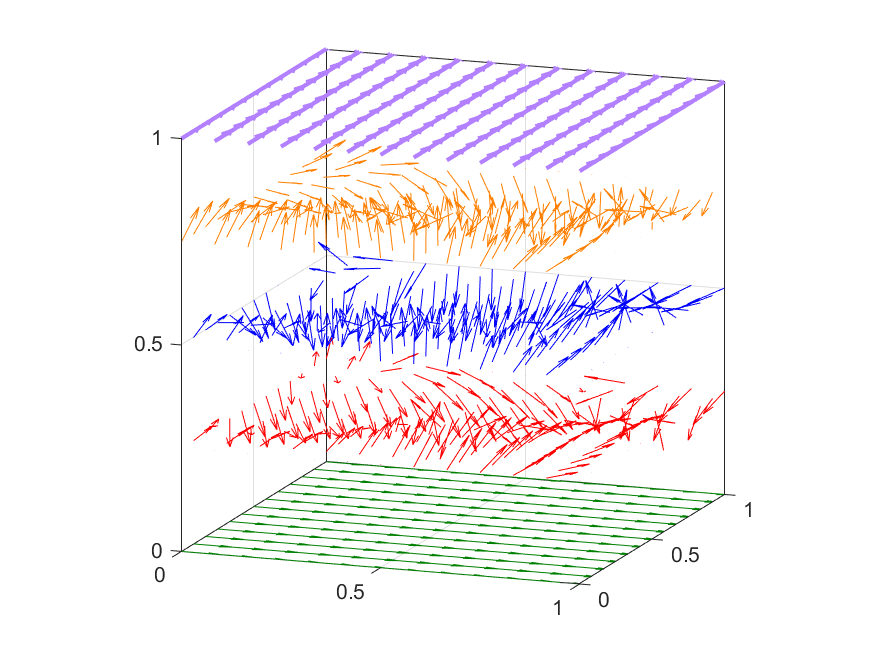}}\\
		\subfloat[$ t=0.15$.]{\includegraphics[trim=1.5cm 0.5cm 1.5cm 0.5cm, clip,width=0.43\linewidth]{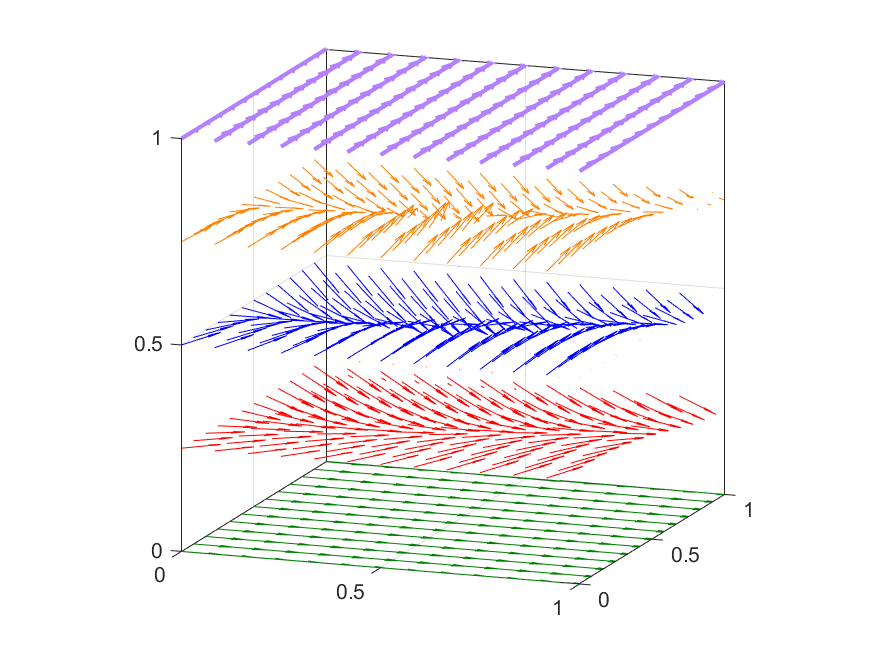}}
		\subfloat[$ t=0.5 $.]{\includegraphics[trim=1.5cm 0.5cm 1.5cm 0.5cm, clip,width=0.43\linewidth]{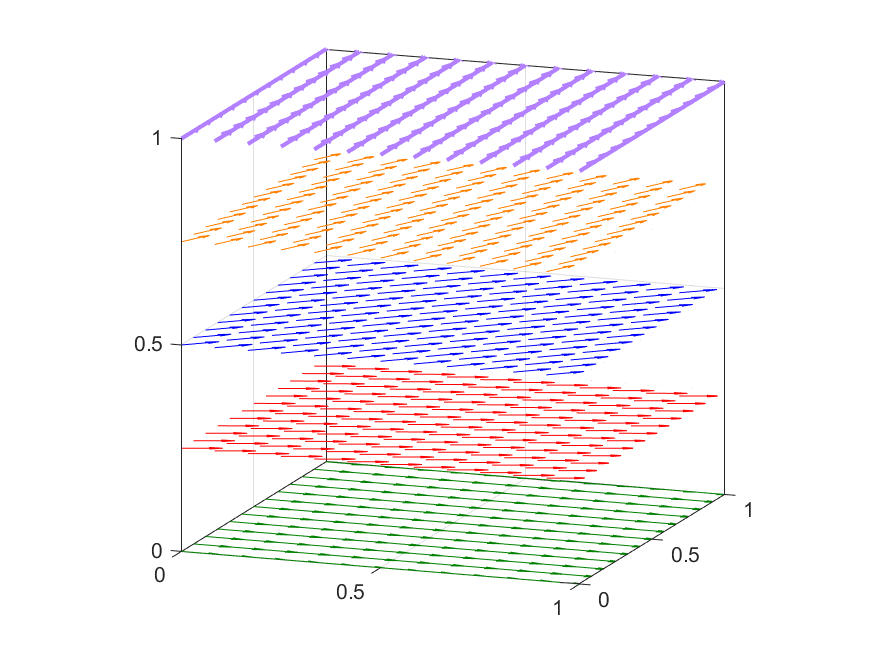}}
		\caption{ Example \ref{ex_3_2}: Numerical solutions $\bm{m}$ at $t = 0,\, 0.05,\, 0.15,\, 0.5$ computed using the PRK2 scheme with $\tau = 5\times10^{-3}$. \label{fig:Example 3.2-1}}
	\end{figure}
	
	\begin{figure}[!htbp]
		\centering	
		\subfloat[$ t=0 $.]{\includegraphics[trim=5cm 0.5cm 5cm 0.5cm, clip,width=0.2\linewidth]{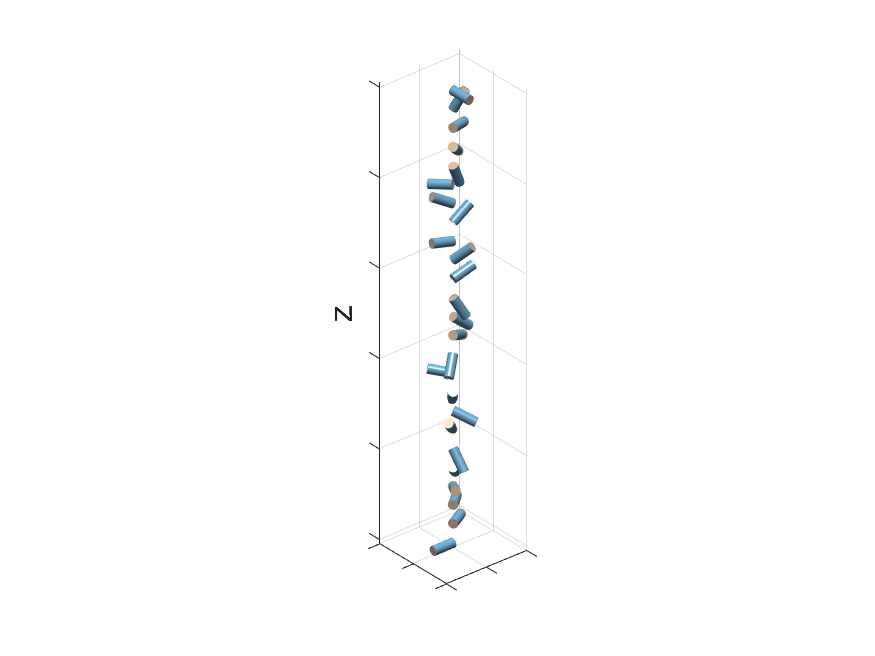}}
		\subfloat[$ t=0.05 $.]{\includegraphics[trim=5cm 0.5cm 5cm 0.5cm, clip,width=0.2\linewidth]{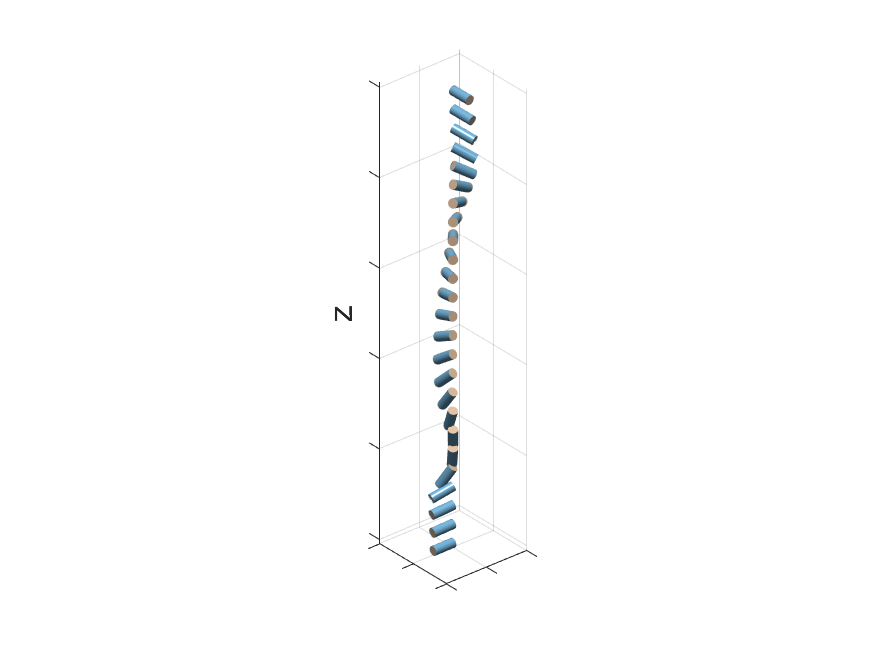}}
		\subfloat[$ t=0.15$.]{\includegraphics[trim=5cm 0.5cm 5cm 0.5cm, clip,width=0.2\linewidth]{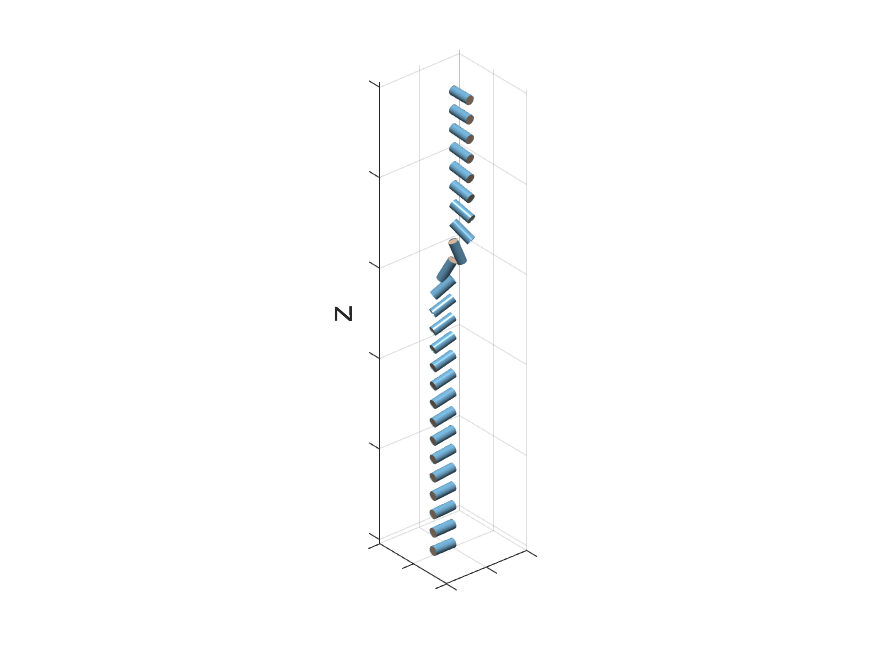}}
		\subfloat[$ t=0.5 $.]{\includegraphics[trim=5cm 0.5cm 5cm 0.5cm, clip,width=0.2\linewidth]{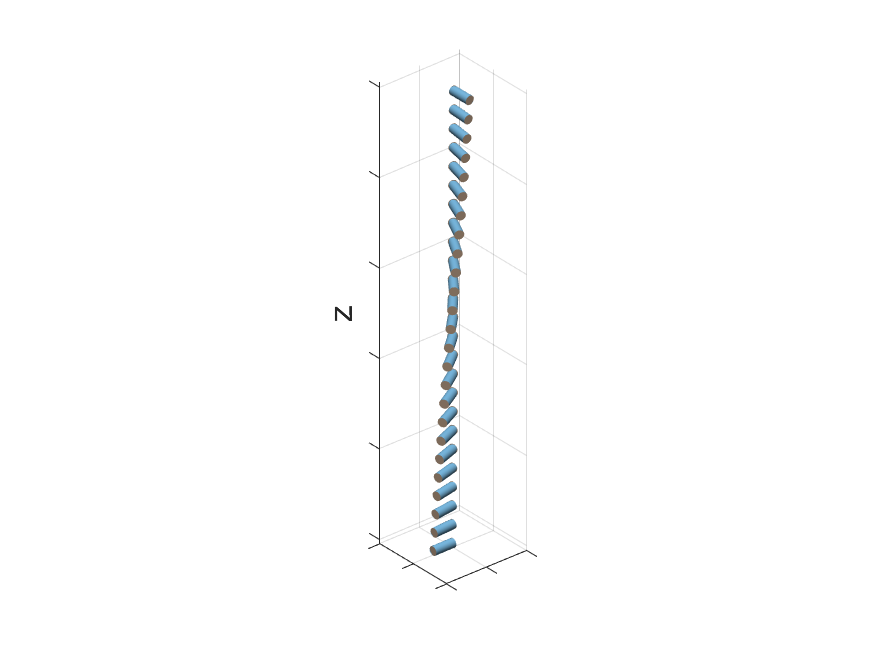}}
		\caption{ Example \ref{ex_3_2}: Numerical solutions $\bm{m}$ at $\{(x,y,z)\in\mathbb{R}^3:x=y=0.5\}$ for $t = 0,\, 0.05,\, 0.15,\, 0.5$ computed using the PRK2 scheme with $\tau = 5\times10^{-3}$. \label{fig:Example 3.2-2}}
	\end{figure}   

\section*{Acknowledgments}
This work is supported by the National Science Foundation of China (No.12271240, 12426312), the fund of the Guangdong Provincial Key Laboratory of Computational Science and Material Design, China (No.2019B030301001), and the Shenzhen Natural Science Fund (RCJC20210609103819018). The third author is partially supported by Zhuhai Innovation and Entrepreneurship Team Project (2120004000498).

\bibliographystyle{siamplain}
\bibliography{references}
\end{document}